\newtheorem{theorem}{Theorem}[section]
\newtheorem{lem}[theorem]{Lemma}
\newtheorem{prop}[theorem]{Proposition}
\newtheorem{rem}[theorem]{Remark}
\newtheorem{definition}[theorem]{Definition}
\begin{document}

\title{\vspace{-30pt}Scaling limit for the cover time\\
of the $\lambda$-biased random walk\\
on a binary tree with $\lambda<1$}
\author{D.\ A.\ Croydon\footnote{Research Institute for Mathematical Sciences, Kyoto University, croydon@kurims.kyoto-u.ac.jp.}}

\maketitle

\begin{abstract}
The $\lambda$-biased random walk on a binary tree of depth $n$ is the continuous-time Markov chain that has unit mean holding times and, when at a vertex other than the root or a leaf of the tree in question, has a probability of jumping to the parent vertex that is $\lambda$ times the probability of jumping to a particular child. (From the root, it chooses one of the two children with equal probability.) For this process, when $\lambda<1$, we derive an $n\rightarrow \infty$ scaling limit for the cover time, that is, the time taken to visit every vertex. The distributional limit is described in terms of a jump process on a Cantor set that can be seen as the asymptotic boundary of the tree. This conclusion complements previous results obtained when $\lambda\geq 1$.\\
\textbf{Keywords:} random walk, cover time, binary tree, jump process, Cantor set.
\end{abstract}

\section{Introduction}

For random walks on finite graphs, a fundamental quantity to study is the cover time, that is, the time taken to visit every vertex. Moreover, for a sequence of graphs of increasing size, it is natural to ask how the cover time behaves asymptotically. Research in this direction has been extensive and we will not even attempt to give a comprehensive survey of such here. Rather, we focus on one particular example, namely, the binary tree. This simple graph is a valuable test case for probabilistic techniques and provides insight into the behaviour one might expect for related, but more complex, models.

To proceed with the discussion, let us start by introducing the main objects of interest. In particular, for $n\in\mathbb{N}$, we write $T_n$ for the binary tree of depth $n$, and $\rho_n$ for its root vertex. This is a graph with $2^{n+1}-1$ vertices, see Figure \ref{btfig} for an illustration of the binary tree of depth $4$. (Precise definitions are postponed to Section \ref{sec2}.) For a given $\lambda>0$, we consider the continuous-time Markov chain $((X^n_t)_{t\geq 0},({P}_x^n)_{x\in T_n})$ on $T_n$ that has unit mean holding times and jump probabilities given by: for $x\neq \rho_n$,
\begin{equation}\label{pndef}
P^n_x\left(X_1=y\right)=\left\{
                            \begin{array}{ll}
                              \frac{1}{\mathrm{deg}_{T_n}(x)-1+\lambda}, & \hbox{if $y$ is a child of $x$,} \\
                              \frac{\lambda}{\mathrm{deg}_{T_n}(x)-1+\lambda}, & \hbox{if $y$ is the parent of $x$,}
                            \end{array}
                          \right.
\end{equation}
where $\mathrm{deg}_{T_n}(x)$ is the usual graph degree of $x$ in $T_n$, and also
\[P^n_{\rho_n}\left(X_1=y\right)=\frac{1}{2},\qquad \hbox{if $y$ is a child of $\rho_n$.}\]
The process $X^n$ is the $\lambda$-biased random walk on $T_n$, and we will be interested in its cover time,
\begin{equation}\label{tcdef}
\tau_{\mathrm{cov}}(X^n):=\inf\left\{t\geq 0:\:X_{[0,t]}^n=T_n\right\},
\end{equation}
where we use the notation $X_{[0,t]}^n:=\{X_s^n:\:s\in[0,t]\}$, under the probability measure ${P}_{\rho_n}^n$.

In the cases when $\lambda=1$ or $\lambda>1$, the asymptotic behaviour of $\tau_{\mathrm{cov}}(X^n)$ is already well-understood. Indeed, in the unbiased case, i.e.\ $\lambda=1$, an increasingly detailed description of the scaling limit of the cover time was obtained through the works \cite{Ald,BRZ,BZ,DZ}, culminating in the result of \cite{CLS}, which demonstrated that, as $n\rightarrow\infty$,
\[\frac{\tau_{\mathrm{cov}}(X^n)}{n2^{n+1}}-n\log(2)+\log(n)\]
converges in distribution. (We have cited the statement exactly as given in \cite{CLS}, though note that the authors of that paper studied the so-called variable speed random walk, which has a jump rate of one along each edge. The corresponding statement for our definition of $X^n$ should incorporate an additional deterministic constant, see \cite[Remark 1]{CLS}.) The limit was described in terms of a Gumbel distribution, randomly scaled by a certain martingale limit of a Gaussian process defined on the infinite binary tree. This characterization is natural given the strong connections known to exist between Gaussian fields and the cover time, as was very clearly exhibited in the seminal paper \cite{DLP}, see also \cite{Ding}. Also, it has been seen that the results concerning the binary tree are closely related to those seen for random walk on the two-dimensional torus, see the discussion in \cite{CLS} for background.

\begin{figure}[t]
\begin{center}
\includegraphics[width=0.8\textwidth]{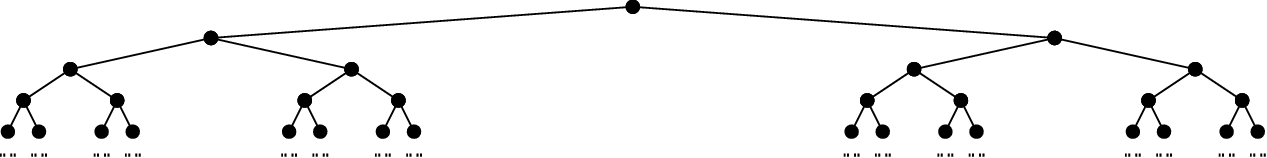}
\end{center}
\caption{The binary tree $T_n$ with $n=4$ and the `middle-thirds' Cantor set capturing the boundary of $T_n$ as $n\rightarrow\infty$.}\label{btfig}
\end{figure}

As for the $\lambda$-biased walk on the binary tree with $\lambda>1$, this was studied in \cite{Bai}. In fact, the paper \cite{Bai} handles the situation for $\lambda$-biased walks on supercritical Galton-Watson trees, but we will not consider the more general situation here. Specifically, \cite[Theorem 1.1]{Bai} gives (for a version of $T_n$ that is extended by adding a single edge to the root and running the random walk from this additional vertex) a distributional limit in terms of a Gumbel distribution for
\[\frac{(\lambda-1)(1-\frac{2}{\lambda})\tau_{\mathrm{cov}}(X^n)}{2\lambda^{n+1}}-n\log(2),\qquad \hbox{if $\lambda>2$},\]
\[\frac{\tau_{\mathrm{cov}}(X^n)}{(n+1)2^{n+2}}-n\log(2),\qquad \hbox{if $\lambda=2$},\]
\[\frac{(\lambda-1)(\frac{2}{\lambda}-1)\tau_{\mathrm{cov}}(X^n)}{2^{n+2}}-n\log(2),\qquad \hbox{if $\lambda\in(1,2)$}.\]
(Observe that a $\lambda=2$ threshold is natural, since this represents the phase transition between recurrence and transience for the $\lambda$-biased random walk on the infinite binary tree.)\footnote{We note there is a small error in the statement of \cite[Theorem 1.1]{Bai}. In particular, in the case $\lambda=m$ ($\lambda=2$ for us), the term $\sum_{i=0}^nZ_i$ should be replaced by $\sum_{i=0}^nZ_i/m^i$, which in our setting corresponds to the factor $n+1$.}

This leaves the case $\lambda\in (0,1)$, which has so far not been dealt with. It is the aim of this work to derive the first-order distributional asymptotics of $\tau_{\mathrm{cov}}(X^n)$ for $\lambda$ in this range. In contrast to the previously-described results, where the first-order behaviour is non-random, we will show that this is not the case when $\lambda<1$. (One can find a general result that gives a condition for concentration of the cover time on its mean in \cite{Aldcov}.) To explain this, we make two basic observations that will be made rigourous later in the article. Firstly, if we consider the electrical network naturally associated with $X^n$, in the case when $\lambda<1$, this converges to a compact tree-like metric space. Secondly, the invariant measure of $X^n$, scaled to be a probability measure, has a limit that is supported on the leaves of this metric space. By applying known results concerning Brownian motion on real trees and their time changes, we are able to show that (certain approximations to) $X^n$ converge in a suitable sense to a jump process that lives on the leaves, which can be represented by the `middle-thirds' Cantor set (again, see Figure \ref{btfig}, and also Lemma \ref{lem41} and Remark \ref{rem42} for further details). Moreover, with suitably careful analysis of the processes in question, one can check that the cover times also converge. Specifically, our main result is the following. A precise definition of the limiting process $X^\Sigma$ is given in Section \ref{sec3}; in particular, it is characterised by the Dirichlet form at \eqref{xsigform}.

\begin{theorem}\label{main}
Fix $\lambda\in(0,1)$, and let $X^n$ be the $\lambda$-biased random walk on the binary tree of depth $n$, started from its root $\rho_n$. It then holds that
\[\frac{2-\lambda}{4\lambda}\left(\frac{\lambda}{2}\right)^n\tau_{\mathrm{cov}}(X^n)\rightarrow \bar{\tau}_{\mathrm{cov}}(X^\Sigma),\]
in distribution as $n\rightarrow \infty$, where $X^\Sigma$ is a certain jump process on the middle-thirds Cantor set, started from an arbitrary point in that space, and $\bar{\tau}_{\mathrm{cov}}(X^\Sigma)$ is a $(0,\infty)$-valued random variable defined by setting
\begin{equation}\label{otau}
\bar{\tau}_{\mathrm{cov}}(X^\Sigma):=\inf\left\{t\geq 0:\:\overline{X^\Sigma_{[0,t]}}=\Sigma\right\}.
\end{equation}
(On the right-hand side above, we have used the notation $\overline{X^\Sigma_{[0,t]}}$ to represent the closure of ${X^\Sigma_{[0,t]}}$.) Moreover, writing $\|\cdot\|_p$ for the usual $L^p$ norm, we have for any $p\geq 1$ that
\[\frac{2-\lambda}{4\lambda}\left(\frac{\lambda}{2}\right)^n\left\|\tau_{\mathrm{cov}}(X^n)\right\|_{p}\rightarrow \left\|\bar{\tau}_{\mathrm{cov}}(X^\Sigma)\right\|_p,\]
where the limit takes a value in $(0,\infty)$.
\end{theorem}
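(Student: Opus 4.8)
The plan is to couple the discrete walk $X^n$ with the limiting jump process $X^\Sigma$ through the natural electrical/metric-space structure and to argue convergence of cover times in two directions: a tightness-type lower bound and an upper bound, with $L^p$-convergence following from uniform integrability. As a preliminary, I would set up the standard picture: associate to $X^n$ its conductance network, rescale conductances so that the associated metric spaces $(T_n, R_n)$ (with $R_n$ the resistance metric, suitably normalised by the factor $\tfrac{2-\lambda}{4\lambda}(\tfrac{\lambda}{2})^n$ absorbed appropriately) converge in the Gromov–Hausdorff–vague-type topology to the compact real tree $\mathcal{T}$ whose leaf set is identified with $\Sigma$; and check that the rescaled invariant measures $\mu_n$ converge weakly to the measure $\mu_\Sigma$ supported on the leaves, which is the speed measure defining $X^\Sigma$ via \eqref{xsigform}. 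Given the framework of Croydon's work on scaling limits of time-changed Brownian motions on trees (which I would invoke as cited machinery), this yields that the rescaled processes $X^n$ converge to $X^\Sigma$ in the sense of finite-dimensional distributions, and in fact in a path sense after the time change, started from corresponding points.

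The first substantive step is the \textbf{lower bound / tightness} for the rescaled cover times. Here I would use the trace of $X^n$ on the leaves: since covering $T_n$ is equivalent (up to a negligible discrepancy, because once a leaf's neighbourhood is entered the leaf itself is hit quickly) to covering the leaf set, and the leaf set embeds into $\Sigma$, the cover time of $X^n$ dominates (asymptotically) a functional of the converging process that, by the portmanteau theorem applied to the open event $\{\overline{X^\Sigma_{[0,t]}} = \Sigma\}$ being hard to achieve quickly, gives $\liminf$-type control. More precisely, for fixed $\varepsilon>0$ one shows that the probability the rescaled $X^n$ covers before time $t-\varepsilon$ is bounded by the probability $X^\Sigma$ has covered an $\varepsilon$-net of $\Sigma$ by time $t$, plus $o(1)$; letting $\varepsilon\downarrow 0$ and using right-continuity of $t\mapsto \overline{X^\Sigma_{[0,t]}}$ in the Hausdorff metric gives the lower semicontinuity needed.

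The second substantive step is the \textbf{upper bound}, and I expect this to be the main obstacle. The difficulty is that cover time is a supremum over exponentially many vertices of hitting times, so it is not a continuous functional of the path in any naive topology; a crude union bound over $2^{n+1}$ vertices will not survive the rescaling. The strategy I would adopt is a two-scale decomposition: fix a level $k$ (independent of $n$), decompose $T_n$ into the $2^k$ subtrees hanging below level $k$, and write the cover time as the time to, for each such subtree, enter it and then cover it locally. The ``enter each of the $2^k$ subtrees'' part is a continuous functional (it only involves hitting finitely many macroscopic regions, which converges by the process convergence) and converges to the corresponding quantity for $X^\Sigma$ restricted to the $2^k$ pieces of the Cantor set; the ``cover a subtree locally, given you are in it'' part must be shown to contribute, after rescaling, an amount that tends to $0$ as first $n\to\infty$ and then $k\to\infty$. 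This last estimate is where real work is needed: one needs a uniform (in $n$) bound on the rescaled cover time of a depth-$(n-k)$ subtree started from its root, which should follow from the $\lambda<1$ transience-type estimates — the walk in a biased-toward-leaves regime covers a subtree in time comparable to the subtree's total conductance mass, which after the global rescaling is $O(2^{-k})$ — combined with a Borel–Cantelli / maximal-inequality argument over the $2^k$ subtrees to control the maximum of these local cover times. Matching this upper bound with the lower bound identifies the limit as $\bar\tau_{\mathrm{cov}}(X^\Sigma)$.

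Finally, for the \textbf{$L^p$ statement}, convergence in distribution plus uniform integrability of the $p$-th powers of the rescaled cover times gives convergence of the $L^p$ norms. Uniform integrability I would obtain from a stretched-exponential tail bound on $(\tfrac{2-\lambda}{4\lambda})(\tfrac{\lambda}{2})^n\tau_{\mathrm{cov}}(X^n)$, uniform in $n$: such a bound is standard for cover times via the observation that if the walk has not covered by time $Ct$ then some vertex (equivalently some macroscopic region, by the decomposition above) has been missed, an event whose probability decays like $e^{-c t}$ by independence across excursions and a spectral-gap / return-time estimate for the biased walk; that $X^\Sigma$ itself has $\bar\tau_{\mathrm{cov}}(X^\Sigma)\in(0,\infty)$ a.s.\ with all moments finite follows from the analogous estimate for the limiting jump process (it covers each of the finitely many level-$k$ cells, each of positive $\mu_\Sigma$-mass, in finite expected time, and the Cantor structure gives a convergent bound as $k\to\infty$). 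That the limit is strictly positive is immediate since $\Sigma$ is uncountable while $X^\Sigma_{[0,t]}$ is, for each finite $t$, a path that has visited only countably many — indeed, a Lebesgue-null — portion of $\Sigma$, so $\bar\tau_{\mathrm{cov}}(X^\Sigma)>0$ deterministically.
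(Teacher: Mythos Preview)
Your overall architecture---couple through the real tree, lower bound by a portmanteau argument, upper bound by a two-scale decomposition---is in the right spirit, and your lower-bound step is essentially the paper's Lemma~\ref{lem44}. But there are two genuine gaps, and the paper's route around the first is worth knowing.

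For the upper bound, your decomposition requires a bound, uniform in $n$, on the maximum over $2^k$ subtrees of their rescaled local cover times, vanishing as $k\to\infty$. Obtaining this is essentially the original problem on smaller trees; you cannot bootstrap from a crude estimate because the Matthews bound only gives $\mathbb{E}[\tau_{\mathrm{cov}}(X^n)]=O(n(2/\lambda)^n)$, i.e.\ an extra factor of $n$. The paper avoids any such decomposition by a monotonicity trick: it introduces a set $\tilde\Sigma_n\subseteq\Sigma$ of $2^n$ genuine leaves (one per level-$n$ cylinder), so the associated chain $\tilde X^n$ is literally a time change of $X^\Sigma$, and $\tau_{\mathrm{cov}}(\tilde X^n)\to\bar\tau_{\mathrm{cov}}(X^\Sigma)$ almost surely under the coupling (Lemmas~\ref{lem45}--\ref{lem46}). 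Since the continuous process $X^T$ must pass through all of $\bar\Sigma_n$ on its way to covering the deeper set $\tilde\Sigma_n$, one gets $b_n^{-1}\tau_{\mathrm{cov}}(\bar X^n)\le \tau_{\mathrm{cov}}(\tilde X^n)+o(1)$ for free (Lemma~\ref{lem43}), closing the sandwich with no multiscale analysis.

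For the $L^p$ statement, your proposed tail bound via ``spectral-gap/return-time'' estimates does not deliver what is needed. The standard Markov-plus-iteration argument gives exponential decay in units of $\mathbb{E}[\tau_{\mathrm{cov}}(X^n)]$, but you must first show this expectation is $O((2/\lambda)^n)$ uniformly in $n$, and Matthews-type bounds miss by a factor of $n$. The paper's argument is not elementary here: it uses the Ding--Lee--Peres/Zhai connection to Gaussian fields, the point being that the Talagrand functional $\gamma_2(T_n,\sqrt{d})$ is bounded \emph{uniformly in $n$} (since the resistance diameter stays bounded when $\lambda<1$), hence so is $\mathbb{E}\sup_{x}\eta_n(x)$, and Zhai's concentration result then yields the uniform exponential tail \eqref{concresult}. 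Finally, your argument that $\bar\tau_{\mathrm{cov}}(X^\Sigma)>0$ ``because the range is countable'' is wrong: the definition \eqref{otau} takes the \emph{closure} of the range, which can certainly be all of $\Sigma$; the correct argument (Lemma~\ref{tcovfin}) uses the c\`adl\`ag property to see that the range stays near the starting point for small $t$.
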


\begin{rem}
We conjecture that $\bar{\tau}_{\mathrm{cov}}(X^\Sigma)$ is almost-surely equal to the infimum over times at which ${X^\Sigma_{[0,t]}}=\Sigma$, i.e.\ $\bar{\tau}_{\mathrm{cov}}(X^\Sigma)$ could be replaced by the usual notion of the cover time, defined analogously to \eqref{tcdef}. However, since $X^\Sigma$ is not continuous, this is a non-trivial claim, which our existing argument does not yield.
\end{rem}

Although our argument is relatively concise, it builds heavily on previous work developed for understanding stochastic processes on trees and fractals. Indeed, the construction of natural diffusions on trees goes back to work such as \cite{Kigden,Krebs}, and was completed in great generality in \cite{AEW}. As for the scaling limits of random walks to these processes, this was accomplished in a way suitable for treating critical Galton-Watson trees in \cite{Cr1,Cr2}, and much more broadly in \cite{ALW}. We note that even more general versions of these results, covering so-called resistance metric spaces were presented in \cite{Cr3,CHK}, applying the framework of resistance forms, as developed for understanding stochastic processes on fractals, see \cite{AOF,Kigres}. In this paper, we apply the extension of such results established in \cite{Noda}, which incorporates scaling limits of local times. The traces of processes on trees on their boundaries were studied in detail in \cite{Kigbound,Kigbound2} in the deterministic case; see also \cite{Tokushige} for the case of random trees. Furthermore, using a similar philosophy to that of this paper, it was recently shown that the cover times of random walks on critical Galton-Watson trees converge to that of Brownian motion on Aldous' continuum random tree \cite{ACMM}. However, in that paper, the argument depended on some key symmetries of the continuum random tree, which mean it is not readily transferable to other settings. The argument of this paper is much more straightforward, as it is possible to couple the random walks and the limiting process in a convenient way (in particular, we are able to show certain processes embedded in $X^n$ and $X^\Sigma$ are close), and this enables us to adopt a simpler approach along the lines of that originally proposed in \cite{CLLT}. Finally, we note that the moment convergence of Theorem \ref{main} is derived by applying the aforementioned connection between Gaussian fields and cover times, with appeals to results of \cite{DLP,Zhai} in particular. (In the proof of Theorem \ref{main}, we note some further historical references that are relevant in the application of these.)

\begin{rem}
(a) As also commented in \cite{Bai,CLS} for the $\lambda\geq 1$ case, when $\lambda<1$, the asymptotics of $\tau_{\mathrm{cov}}(X^n)$ are readily transferred to the discrete-time $\lambda$-biased random walk on $T_n$. In particular, the statement of Theorem \ref{main} applies to the latter process with no changes, since the difference between the time-scaling of the discrete and continuous processes occurs at a smaller order.\\
(b) The same argument should apply to more general deterministic trees, of the kind studied in \cite{Kigbound,Kigbound2}, or random ones, as considered in \cite{Bai,Tokushige}, at least under appropriate technical conditions. The limiting process would then be described as a  jump process on a more general (and possibly random) Cantor set. For the simplicity of presentation, we have chosen not to pursue this added generality here.
\end{rem}

To conclude the introduction, let us summarise the various regimes of the $\lambda$-biased random walk on a binary tree, including the contribution of the present work. In the following table, we present the first order growth rate of the cover time, as follows from Theorem \ref{main} and \cite{Bai,CLS}. We believe it is also informative to compare this with the corresponding growth rates for the resistance diameters and the total conductances of the electrical networks associated with $X^n$, $n\geq 1$ (see Section \ref{sec3} for definitions of these in our setting; these natural extend to all $\lambda>0$). For simplicity, we omit constants. In the final column, we describe the nature of the scaling limit for the cover time to first order; as described above, smaller order terms have been studied when $\lambda\geq 1$.
\smallskip

\begin{center}
\begin{tabular}{|c|c|c|c|c|}
  Regime & \begin{tabular}{c}Resistance\\diameter\end{tabular} & \begin{tabular}{c}Total\\conductance\end{tabular} & \begin{tabular}{c}Cover\\time\end{tabular} & Note\\
  \hline
$\lambda<1$       & 1           & $(2/\lambda)^n$ &  $(2/\lambda)^n$ & Random limit\\
$\lambda=1$       & $n$         & $(2/\lambda)^n$ & $n^22^n$         & Deterministic limit\\
$\lambda\in(1,2)$ & $\lambda^n$ & $(2/\lambda)^n$ & $n2^n$           & Deterministic limit\\
$\lambda=2$       & $\lambda^n$ & $n$             & $n^22^n$         & Deterministic limit\\
$\lambda>2$       & $\lambda^n$ & $1$             & $n\lambda^n$     & Deterministic limit\\
\end{tabular}
\end{center}
\smallskip

\noindent
Note in particular that for $\lambda<1$, the cover time is of the order of total conductance times resistance diameter, whereas in the cases covering $\lambda\geq 1$, there is an additional factor of $n$ (which is of the order of $\log (|T_n|)$). See also Figure \ref{fig2}. (We recall that total conductance times resistance diameter is always of the same order as the maximal expected hitting time of one vertex from another for a simple random walk on a graph, as can be seen from the commute time identity, see \cite[(2.17)]{CHK}, for example.) This divides the situation into the two extreme cases with regards the possible asymptotic behaviour of the cover time, as studied in \cite{Abe}, for example. (In the latter paper, the discrete time walk was considered, but as already commented, the first order results readily transfer to that setting.) Specifically, the $\lambda\geq 1$ case gives us examples of random walks whose cover time is maximal, and, as per the criterion of Aldous \cite[Theorem 2]{Aldcov}, the cover time concentrates on its mean. On the other hand, the $\lambda<1$ case gives us examples of random walks whose cover time is minimal, and, similarly to the conclusion of \cite[Proposition 1]{Aldcov} (which is for simple symmetric random walks), fails to concentrate on its mean.

\begin{figure}[t]
\begin{center}
\includegraphics[width=0.4\textwidth]{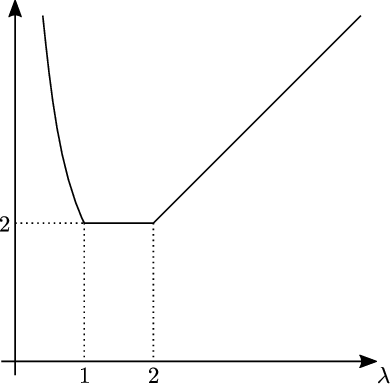}
\end{center}
\caption{A plot of the value of $\exp(\lim_{n\rightarrow\infty}n^{-1}\log\|\tau_{\mathrm{cov}}(X^n)\|_1$, which determines the geometric growth rate of the cover time, for the $\lambda$-biased random walk on a binary tree.}\label{fig2}
\end{figure}

The remainder of the article is organised as follows. In Section \ref{sec2}, we describe the spaces of the discussion precisely. This is followed in Section \ref{sec3} with an introduction to the associated stochastic processes and the connections between them. With these preparations in place, we then prove the main result, Theorem \ref{main}, in Section \ref{sec4}. Finally, the article concludes with Appendix \ref{appa}, in which we summarise some key background material on stochastic processes associated with resistance forms and their time changes. Note that we sometimes use a continuous variable $x$ where a discrete one is required; in such cases $x$ should be read as $\lfloor x\rfloor$ or $\lceil x\rceil$, as appropriate. Constants such as $C$ and $c$ take values in $(0,\infty)$, and may change from line to line.

\section{The underlying space}\label{sec2}

To prove our main result, it will be convenient to embed all the binary trees into a common metric space. The purpose of this section is to introduce this setting. The framework we describe is covered in greater generality in \cite{AEW} and \cite{Kigbound}.

First, for $n\geq 1$, define $\Sigma_n:=\{0,1\}^n$, let $\Sigma_0:=\{\emptyset\}$, and set $\Sigma_*:=\cup_{n\geq 0}\Sigma_n$. The binary tree $T_n$ can then be defined to be the graph with vertex set $\cup_{0\leq m\leq n}\Sigma_m$ and edges given by pairs of the form $\{i,ij\}$, where $i\in \cup_{0\leq m\leq n-1}\Sigma_m$, $j\in\{0,1\}$, and we write $ij$ for the concatenation of $i$ and $j$. (In a slight abuse of notation, we will often identify $T_n$ with its vertex set.) In particular, for all $n$, the root $\rho_n$ is given by $\emptyset$. Note that, with this construction, we have that $T_n\subseteq T_{n+1}$. In a similar way, we define the infinite binary tree $T_*$, which has vertex set $\Sigma_*$ and edges given by pairs of the form $\{i,ij\}$, where $i\in\Sigma_*$, $j\in\{0,1\}$.

We next extend $T_*$ to be a real tree (see \cite[Section 1.1]{AEW}, for example) by placing along each edge of the form $\{i,ij\}$, where $i\in\Sigma_n$, $j\in\{0,1\}$, a line segment of length $\lambda^n$, where $\lambda\in(0,1)$ is the fixed constant appearing in \eqref{pndef}. We write $T^o$ for the resulting space, and $d^o$ for the naturally-induced shortest path metric on $T^o$; this is the space of \cite[Example 1.12]{AEW} with $k=2$ and $c=\lambda$. We then let $(T,d)$ be the completion of $(T^o,d^o)$, and make the following basic observations about this space. (Since the proof is not difficult, we will be brief with the details.)

\begin{lem}\label{props}
(a) If $N(T,d,\varepsilon)$ is the minimal number of $\varepsilon$-balls needed to cover $(T,d)$, then
\[N(T,d,\varepsilon)\leq C\varepsilon^{-c},\]
where $C$ is a finite constant and $c=-\log(2/\lambda)/\log(\lambda)$.\\
(b) The metric space $(T,d)$ is a compact real tree.\\
(c) There is a one-to-one correspondence between the set $T\backslash T^o$ and $\Sigma=\{0,1\}^\mathbb{N}$, which is characterized by identifying $i\in\Sigma$ with $\lim_{n\rightarrow\infty}(i_1,\dots,i_n)$. (In what follows, we identify $T\backslash T^o$ and $\Sigma$.)\\
(d) The set $T\backslash\{x\}$ is connected if and only if $x\in \Sigma$, i.e.\ $\Sigma$ is the set of leaves of $T$.\\
(e) The space $(\Sigma,d)$ is topologically equivalent to the usual middle-thirds Cantor set, i.e.\ the unique compact set $K\subseteq\mathbb{R}$ satisfying
\[K=\bigcup_{i=0}^1\psi_i(K),\]
where $\psi_0(x)=\frac{1}{3}x$ and $\psi_1(x)=\frac{2}{3}+\frac{1}{3}x$, equipped with the usual Euclidean topology.
\end{lem}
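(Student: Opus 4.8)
The plan is to reduce everything to a single geometric decomposition of $(T,d)$, which I would set up first. For $v\in\Sigma_n$, let $T^{(v)}\subseteq T$ denote the closure of the union of the segments placed along the edges $\{w,wj\}$ for which $v$ is a prefix of $w$ --- ``the part of $T$ lying below $v$''. From the choice of edge lengths, $T^{(v)}$ with its induced metric is a rescaled copy of $(T,d)$ with scaling factor $\lambda^n$; in particular $\mathrm{diam}\,T^{(v)}\le 2\lambda^n/(1-\lambda)$, the sets $\{T^{(v)}:v\in\Sigma_n\}$ are pairwise disjoint, and
\[
T=T_n\cup\bigcup_{v\in\Sigma_n}T^{(v)},
\]
where $T_n$ is a finite tree with $2^{n+1}-2$ edges and total edge length $2\sum_{k=0}^{n-1}(2\lambda)^k$ that meets each $T^{(v)}$ only at the depth-$n$ vertex $v$. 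Also $T^o=\bigcup_nT_n$, so every $T_n$ is a compact subset of $T^o$.

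Given this, part (a) is a counting argument. For $\varepsilon>0$, take $n$ minimal with $\lambda^n\le\varepsilon$, so $\lambda^n\le\varepsilon<\lambda^{n-1}$. Each $T^{(v)}$, $v\in\Sigma_n$, is a $\lambda^n$-rescaling of the (totally bounded) space $T$ seen at a relative scale $\varepsilon\lambda^{-n}\in[1,\lambda^{-1})$, so it is covered by $O(1)$ balls of radius $\varepsilon$, contributing $O(2^n)$ balls altogether; and $T_n$ is covered by $O\big(\mathrm{length}(T_n)/\varepsilon+\#\{\text{edges of }T_n\}\big)$ balls. Using $\mathrm{length}(T_n)\le C_\lambda\max(1,n,(2\lambda)^n)$ together with $\lambda^n\le\varepsilon<\lambda^{n-1}$, each of these terms is $O((2/\lambda)^n)$, and since $(2/\lambda)^n\le (2/\lambda)\,\varepsilon^{-c}$ for $c=-\log(2/\lambda)/\log\lambda$, the stated bound follows (it is not sharp, which is immaterial here). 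For part (b), $(T^o,d^o)$ is a real tree, being precisely the space of \cite[Example 1.12]{AEW}; the metric completion of a real tree is again a real tree, and combining this with part (a) and the completeness of a completion shows $(T,d)$ is a compact real tree.

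For part (c), given $x\in T$ and $n\ge0$, the decomposition places $x$ either in $T_n$ or in exactly one $T^{(v)}$ with $v\in\Sigma_n$; if $x\notin T^o$ then $x\notin T_n$ for every $n$, the associated vertices $v_n(x)\in\Sigma_n$ are nested, and so $x$ determines an element $\phi(x)\in\Sigma$. Conversely, for $i\in\Sigma$ the vertices $(i_1,\dots,i_n)$ form a Cauchy sequence --- the tail beyond level $n$ has diameter at most $\lambda^n/(1-\lambda)$ --- whose limit $\xi(i)$ lies in every $T^{(i_1\cdots i_n)}$ and hence, by the decomposition applied to a point on a fixed edge, is not in $T^o$; one then checks $\phi(\xi(i))=i$, and $\phi$ is injective because $\phi(x)=\phi(y)$ forces $x$ and $y$ into sets $T^{(i_1\cdots i_n)}$ of vanishing diameter. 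For part (d), every point of $T^o$ is a cut point --- a point interior to a segment separates that segment, and a vertex $v$ (including the root $\emptyset$, which has degree $2$) separates $T^{(v0)}$ from $T^{(v1)}$ --- so $T\setminus\{x\}$ is disconnected when $x\in T^o$; whereas if $x=\xi(i)\in\Sigma$ and $y,z$ are points other than $x$, choosing $n$ with $\mathrm{diam}\,T^{(i_1\cdots i_n)}<\min(d(x,y),d(x,z))$ puts $y$ and $z$ into $T\setminus T^{(i_1\cdots i_n)}$, which is connected because a geodesic between two points lying outside the branch $T^{(v)}$ cannot touch the gluing vertex $v$ without backtracking --- so $T\setminus\{x\}$ is connected, and $\Sigma$ is exactly the leaf set.

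Finally, for part (e), the bijection $\Sigma\to\{0,1\}^{\mathbb N}$ from part (c) is a homeomorphism for the product topology: the cylinder $\Sigma\cap T^{(i_1\cdots i_n)}$ has $d$-diameter at most $2\lambda^n/(1-\lambda)\to0$ (so cylinders are $d$-open), while two points of $\Sigma$ whose addresses first differ at coordinate $k$ lie at distance at least $2\lambda^{k-1}$ (so every $d$-ball contains a cylinder). Since $\{0,1\}^{\mathbb N}$ with the product topology is homeomorphic to the middle-thirds Cantor set $K$ --- explicitly via $(i_k)\mapsto\sum_k 2i_k3^{-k}$, or abstractly since $(\Sigma,d)$ is compact, perfect and totally disconnected --- part (e) follows. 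I expect the only step needing genuine care to be the bookkeeping in (a), especially separating the cases $\lambda<1/2$, $\lambda=1/2$, $\lambda>1/2$ in the bound on $\mathrm{length}(T_n)$, and, to a lesser degree, phrasing (d) so that the connectedness of $T\setminus T^{(v)}$ and the cut-point property of the root are both cleanly handled; everything else is an immediate consequence of the decomposition.
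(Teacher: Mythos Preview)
Your decomposition into $T_n$ and the subtrees $T^{(v)}$ is exactly the one the paper uses (there written $T(x)$), and parts (b)--(e) proceed along the same lines, with you supplying more detail in (d) and (e) where the paper is terser. The one wrinkle is in (a): your covering of each $T^{(v)}$ by $O(1)$ balls of radius $\varepsilon$ appeals to the total boundedness of $T$ at scales in $[1,\lambda^{-1})$, which is part of what (a) is meant to establish, so as written the step is circular. The fix is immediate and is what the paper does: choose $n$ so that $\lambda^n/(1-\lambda)\le\varepsilon$ (rather than $\lambda^n\le\varepsilon$); then every point of $T^{(v)}$ lies within $\varepsilon$ of $v$ and a single ball suffices, shifting $n$ only by an additive constant depending on $\lambda$, after which the rest of your bookkeeping goes through unchanged.
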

\begin{proof}
(a) It is easy to check from the construction that $(T,d)$ is a real tree. For $x\in T$, denote by $T(x)$ the set consisting of those elements $y$ of $T$ for which the unique geodesic from $\emptyset$ to $y$ contains $x$. Note that for $x\in \Sigma_n$ and $y\in T(x)$, we have that
\begin{equation}\label{dxy}
d(x,y)\leq \sum_{m=n}^\infty\lambda^m=\frac{\lambda^n}{1-\lambda}.
\end{equation}
Hence
\[N\left(\cup_{x\in\Sigma_n}T(x),d,\frac{\lambda^n}{1-\lambda}\right)\leq 2^n.\]
Moreover, by considering each of the paths from $\emptyset$ to $x$, we have that
\[N\left(T\backslash\cup_{x\in\Sigma_n}T(x),d,\frac{\lambda^n}{1-\lambda}\right)\leq 2^n\left(\lambda^{-n} +1\right),\]
and so
\[N\left(T,d,\frac{\lambda^n}{1-\lambda}\right)\leq 2^n\left(\lambda^{-n} +2\right).\]
From this, the result readily follows.\\
(b) As already noted, $(T,d)$ is a real tree, and it is complete by definition. From part (a), we have that $(T,d)$ is totally bounded, and therefore it is also compact.\\
(c) One can readily check that $x\in T\backslash T^o$ if and only if
\[\{x\}=\bigcap_{n\geq 1}T(i_1,i_2,\dots,i_n)\]
for some $i=(i_1,i_2,\dots)\in \Sigma$, and, if the above equality holds, then $x=\lim_{n\rightarrow\infty}(i_1,\dots,i_n)$. These observations yield the desired correspondence.\\
(d) Again, this is straightforward to check from the construction.\\
(e) It is an elementary exercise to show that the map from $\Sigma$ to the middle-thirds Cantor set defined by
\begin{equation}\label{ilim}
i\mapsto \lim_{n\rightarrow\infty}\psi_{i_1}\circ\dots\circ\psi_{i_n}(0)
\end{equation}
is a homeomorphism, and this implies the result.
\end{proof}

We will equip $(T,d)$ with various Borel measures. (As an aid to the reader, we provide a sketch of their supports in Figure \ref{supports}.) To begin with, we will introduce a measure $\mu_T$ of full support by placing upon each edge of the form $\{i,ij\}$, where $i\in\Sigma_n$, $j\in\{0,1\}$, a copy of the one-dimensional Hausdorff measure, normalized to have mass equal to $(\lambda^{-1}-1)(2/\lambda)^{-n-1}$. Note that this measure has total mass given by
\[(\lambda^{-1}-1)\sum_{n=0}^\infty2^{n+1}\times\left(\frac{2}{\lambda}\right)^{-n-1}=(1-\lambda)\sum_{n=0}^\infty\lambda^n=1.\]
Hence $\mu_T$ is a probability measure on $T$. To define a measure that is supported on $\Sigma$, we first introduce notation $\Sigma(i):=\{ij:\:j\in \Sigma\}$, where $i\in\Sigma_*$. (Note that, in the notation of the proof of Lemma \ref{props}, $\Sigma(i)=\Sigma\cap T(i)$.) We then set $\mu_{\Sigma}$ to be the unique measure that satisfies
\begin{equation}\label{mus}
\mu_{\Sigma}\left(\Sigma(i)\right):=2^{-n},\qquad \forall i\in \Sigma_n,\:n\geq 0.
\end{equation}
Again, this is a probability measure. (If one were to map $\Sigma$ to the middle-thirds Cantor set in the way described at \eqref{ilim}, then this measure would correspond to the $\log(2)/\log(3)$-dimensional Hausdorff measure on the latter set.)

\begin{figure}[t]
\begin{center}
\includegraphics[width=0.65\textwidth]{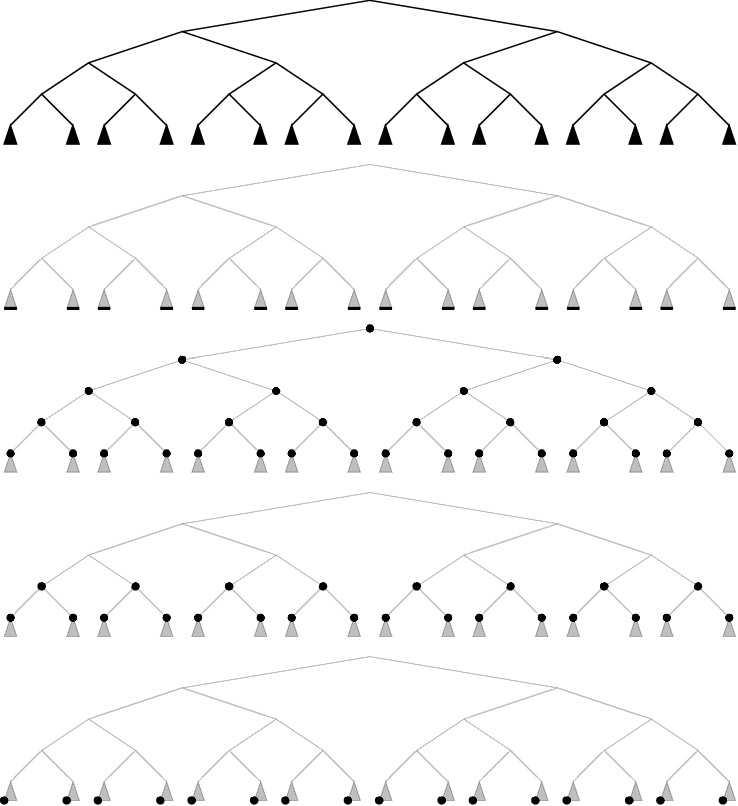}
\end{center}
\caption{A stylised representation of the supports of the measures $\mu_T$, $\mu_{\Sigma}$, $\mu_n$, $\bar{\mu}_n$ and $\tilde{\mu}_n$. In particular, the first sketch shows the support of $\mu_T$ as the whole of $T$, with the solid black triangles representing the parts of $T$ below vertices in $\Sigma_n$ (with $n=4$). The second shows the support of $\mu_{\Sigma}$ as the boundary set $\Sigma$. Third, the measure $\mu_n$ is supported on the vertices of $T_n$. Fourth, as will be introduced in the subsequent section, $\bar{\mu}_n$ is the restriction of $\mu_n$ to $\bar{\Sigma}_n=\cup_{m=n-\log n}^n\Sigma_m$. And finally, $\tilde{\mu}_n$ is the uniform measure on the subset $\tilde{\Sigma}_n$ of $\Sigma$, which has size $2^n$.
}\label{supports}
\end{figure}

To complete the section, we introduce various objects associated with the discrete spaces of interest. In particular, we consider each $T_n$ as an electrical network by equipping each edge of the form $\{i,ij\}$, where $i\in\Sigma_m$, $j\in\{0,1\}$, with a conductance given by
\[c(i,ij)=c(ij,i)=\lambda^{-m}.\]
We note that this is the inverse of the length of the line segments we inserted when defining $T^o$ above, and so the corresponding effective resistance between vertices $x$ and $y$ in $T_n$ is given by $d(x,y)$. (See \cite[Chapter 9]{LPW} for an introduction to electrical networks and their connection with reversible Markov chains, as we will discuss in the subsequent section.) We also introduce a measure on the vertices of $T_n$ by setting
\[\mu_{n}\left(\{x\}\right):=\sum_{y:\:\{x,y\}\in E_n}c(x,y),\]
where we have written $E_n$ for the edge set of $T_n$. In particular, we see that:
\[\mu_{n}\left(\{x\}\right)=\left\{
                              \begin{array}{ll}
                                2, & \hbox{if $x=\emptyset$,} \\
                                \lambda^{-(m-1)}+2\lambda^{-m}, & \hbox{if $x\in \Sigma_m$ for some $1\leq m\leq n-1$,} \\
                                \lambda^{-(n-1)}, & \hbox{if $x\in \Sigma_n$,}
                              \end{array}
                            \right.\]
and moreover,
\[\mu_{n}\left(T_n\right)=2\sum_{m=1}^n2^m\lambda^{-(m-1)}=\frac{4\lambda}{2-\lambda}\left(\left(\frac{2}{\lambda}\right)^n-1\right).\]
We will denote the constant on the right-hand side here by $b_n$. Finally, we define a set of leaves $\tilde{\Sigma}_n\subseteq\Sigma$ by setting
\[\tilde{\Sigma}_n:=\left\{ij:\:i\in \Sigma_n,\: j=(0,0,0,\dots)\right\},\]
and define a probability measure on this set by letting
\[\tilde{\mu}_n\left(\{x\}\right)=2^{-n},\qquad \forall x\in \tilde{\Sigma}_n.\]
The following result gives some basic properties of the measures $\mu_{n}$ and $\tilde{\mu}_n$.

\begin{lem}\label{lem22}
(a) It holds that
\[\lim_{n\rightarrow\infty}b_n^{-1}\mu_n\left(\bigcup_{m=0}^{n-\log n}\Sigma_m\right)=0.\]
(b) The measures $b_n^{-1}\mu_n$ and $\tilde{\mu}_n$ both converge weakly to $\mu_\Sigma$ as $n\rightarrow\infty$.
\end{lem}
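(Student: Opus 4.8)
The plan is to prove both statements essentially by direct computation, using the explicit formula for $\mu_n(\{x\})$ and the defining property \eqref{mus} of $\mu_\Sigma$.

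For part (a), I would simply sum the formula for $\mu_n(\{x\})$ over the vertices in $\cup_{m=0}^{n-\log n}\Sigma_m$. Since there are $2^m$ vertices in $\Sigma_m$ and each contributes at most (a constant times) $\lambda^{-m}$, the total is of order $\sum_{m=0}^{n-\log n}2^m\lambda^{-m}=\sum_{m=0}^{n-\log n}(2/\lambda)^m$, which is of order $(2/\lambda)^{n-\log n}$. Dividing by $b_n$, which is of order $(2/\lambda)^n$, gives a bound of order $(2/\lambda)^{-\log n}=n^{-\log(2/\lambda)/\log(2/\lambda)}$... more precisely $(2/\lambda)^{-\log n}$, which tends to $0$ since $2/\lambda>2>1$. (Here $\log n$ is to be read as $\lfloor \log n\rfloor$ per the stated convention; the base of the logarithm only affects constants.) This is a routine geometric-series estimate.

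For part (b), since all measures in question are probability measures on the compact space $(T,d)$, and the closed sets $\overline{\Sigma(i)}$ (equivalently, the sets $T(i)\cap\Sigma$ together with the relevant parts of $T^o$) generate the topology, it suffices by a standard argument (e.g.\ convergence of the measures of a generating $\pi$-system of continuity sets, or testing against the indicator-type functions adapted to the tree structure) to check that both $b_n^{-1}\mu_n(T(i))$ and $\tilde\mu_n(T(i))$ converge to $\mu_\Sigma(\Sigma(i))=2^{-n}$ for each fixed $i\in\Sigma_n$. For $\tilde\mu_n$ this is immediate once $n$ is large enough that $\tilde\Sigma_n$ refines $\Sigma_n$: exactly a $2^{-n}$-fraction of the $2^n$ atoms of $\tilde\mu_n$ lie in $T(i)$. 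For $b_n^{-1}\mu_n$, one computes $\mu_n(T(i))$ for $i\in\Sigma_k$ by summing the vertex masses over the sub-tree below $i$ (which is a copy of $T_{n-k}$ with conductances scaled by $\lambda^{-k}$), obtaining $\lambda^{-k}b_{n-k}$ up to a lower-order boundary correction at $i$ itself; dividing by $b_n$ and using $b_{n-k}/b_n\to (\lambda/2)^{-(n-k)}\cdot$... rather, $b_{n-k}=\frac{4\lambda}{2-\lambda}((2/\lambda)^{n-k}-1)$ and $b_n=\frac{4\lambda}{2-\lambda}((2/\lambda)^n-1)$, so $\lambda^{-k}b_{n-k}/b_n\to \lambda^{-k}(\lambda/2)^k\cdot\ldots$; carrying the arithmetic through gives the limit $2^{-k}$, as required. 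Alternatively, and perhaps more cleanly, one notes that part (a) shows the mass of $b_n^{-1}\mu_n$ escapes from the top $n-\log n$ levels, so asymptotically all mass sits on $\bar\Sigma_n=\cup_{m=n-\log n}^n\Sigma_m$, and on each such level the measure is, to leading order, uniformly spread (the $\lambda^{-(m-1)}+2\lambda^{-m}$ weights differing from a clean geometric profile only by bounded multiplicative constants that wash out after normalization), which forces weak convergence to the level-uniform limit $\mu_\Sigma$.

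The main obstacle is the bookkeeping in part (b) for $b_n^{-1}\mu_n$: one must check that the boundary vertices (those in $\Sigma_n$, with the anomalously small mass $\lambda^{-(n-1)}$) and the internal vertices (with mass $\lambda^{-(m-1)}+2\lambda^{-m}$, of a slightly different form than a pure $\lambda^{-m}$) do not distort the limiting profile. I expect this to be handled by showing that the ratio of $\mu_n(T(i))$ to $\mu_n(T_n)$ converges to $2^{-k}$ uniformly over $i\in\Sigma_k$ for each fixed $k$, which is just a matter of taking $n\to\infty$ in an explicit ratio of two expressions of the form $\frac{4\lambda}{2-\lambda}((2/\lambda)^N-1)$; the $-1$ and the vertex-at-$i$ corrections are of strictly smaller order and hence vanish in the limit. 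Everything else is elementary.
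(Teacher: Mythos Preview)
Your proof of part (a) is correct and matches the paper's argument essentially line for line.

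For part (b), your computation that $b_n^{-1}\mu_n(T(i))\to 2^{-k}$ for each fixed $i\in\Sigma_k$ is correct and goes through as you indicate. The one soft spot is the appeal to ``a generating $\pi$-system of continuity sets'': the sets $T(i)$ do \emph{not} generate the Borel $\sigma$-algebra on $T$ (they cannot separate points lying on the interior of the line segments in $T^o$), so the textbook $\pi$-system criterion for weak convergence does not apply verbatim. What actually makes your argument work is that, for each $k$, the sets $T(i)$, $i\in\Sigma_k$, have diameter $O(\lambda^k)$, cover all but a $b_n^{-1}\mu_n$-negligible portion of $T_n$, and carry asymptotically the correct mass $2^{-k}$; combining this with uniform continuity of a test function $f$ gives $b_n^{-1}\mu_n(f)\to\mu_\Sigma(f)$ directly. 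This is precisely your ``alternative'' paragraph, and once stated this way the argument is complete.

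The paper organises (b) a little differently. Rather than computing $\mu_n(T(i))$, it introduces explicit collapsing maps $\tilde\phi_n:\Sigma\to\tilde\Sigma_n$ and $\phi_n:\cup_{i\in\Sigma_{n-\log n}}T(i)\to\tilde\Sigma_{n-\log n}$, notes that $\tilde\mu_n=\mu_\Sigma\circ\tilde\phi_n^{-1}$ and that (after normalisation) $\mu_n\circ\phi_n^{-1}$ is a $\tilde\mu$-type measure, and then bounds $|\nu_n(f)-\mu_\Sigma(f)|$ by $\sup_x|f(x)-f(\phi_n(x))|$ plus a remainder controlled by part (a). This sidesteps any $\pi$-system language and makes the role of the shrinking diameters explicit from the outset. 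Your route is more computational, the paper's more geometric via pushforwards, but the two are equivalent in content and difficulty.
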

\begin{proof}
Part (a) is straightforward. Indeed,
\[b_n^{-1}\mu_n\left(\bigcup_{m=0}^{n-\log n}\Sigma_m\right)\leq 2b_n^{-1}\sum_{m=1}^{n-\log n+1}2^m\lambda^{-(m-1)}=b_n^{-1}b_{n-\log n+1}\leq C\left(\frac{2}{\lambda}\right)^{-\log n}\rightarrow 0.\]

As for part (b), we start by considering $\tilde{\mu}_n$. To study this, we introduce a function $\tilde{\phi}_n:\Sigma\rightarrow\tilde{\Sigma}_n$ that, for each $i\in\Sigma_n$, maps $\Sigma(i)$ to $i(0,0,0,\dots)$. With this identification, we have
\[\tilde{\mu}_n=\mu_\Sigma\circ\tilde{\phi}_n^{-1}.\]
Hence, for any continuous function $f:T\rightarrow \mathbb{R}$,
\begin{equation}\label{jio}
\left|\tilde{\mu}_n(f)-\mu_\Sigma(f)\right|=\left|\int_\Sigma \left(f(x)-f\circ\tilde{\phi}_n(x)\right)\mu_\Sigma(dx)\right|\leq \sup_{x\in\Sigma}\left|f(x)-f\circ\tilde{\phi}_n(x)\right|.
\end{equation}
Now, from \eqref{dxy}, we have that
\[\sup_{x\in\Sigma}d\left(x,\tilde{\phi}_n(x)\right)\leq \frac{2\lambda^n}{1-\lambda}\rightarrow0.\]
Thus, since $f$ is uniformly continuous, the right-hand side of \eqref{jio} converges to 0, and we obtain that $\tilde{\mu}_n\rightarrow\mu_{\Sigma}$, as desired.

To handle $\mu_n$, we similarly define a function ${\phi}_n:\cup_{i\in\Sigma_{n-\log n}}T(i)\rightarrow\tilde{\Sigma}_{n-\log n}$ that, for each $i\in\Sigma_{n-\log n}$, maps $T(i)$ to $i(0,0,0,\dots)$, where $T(i)$ was defined in the proof of Lemma \ref{props}. Note that, for $i\in\Sigma_{n-\log n}$,
\[\mu_n\left(T(i)\right)=2\sum_{m=n-\log n+1}^n2^{m-n+\log n}\lambda^{-(m-1)}+\lambda^{-(n-\log n-1)}\]
does not depend on $i$. Hence, writing $c_n$ for this constant, we have that
\[\tilde{\mu}_n=c_n^{-1}2^{-(n-\log n)}\mu_n\circ\phi_n^{-1}.\]
It follows that, for any continuous function $f:T\rightarrow \mathbb{R}$,
\begin{eqnarray*}
\left|c_n^{-1}2^{-(n-\log n)}{\mu}_n(f)-\tilde{\mu}_n(f)\right|&\leq &c_n^{-1}2^{-(n-\log n)}\left|\int_{\cup_{i\in\Sigma_{n-\log n}}T(i)}\left(f(x)-f\circ{\phi}_n(x)\right)\mu_n(dx)\right|\\
&&\qquad+c_n^{-1}2^{-(n-\log n)}\int_{T\backslash\cup_{i\in\Sigma_{n-\log n}}T(i) }\left|f(x)\right|\mu_n(dx).
\end{eqnarray*}
Since
\[b_n^{-1}c_n=b_n^{-1}\left(\lambda^{-n+\log n}b_{\log n}+\lambda^{-(n-\log n-1)}\right)\sim \lambda^{-n+\log n}\left(\frac{2}{\lambda}\right)^{\log n-n}=2^{-(n-\log n)},\]
we obtain from part (a) that
\[c_n^{-1}2^{-(n-\log n)}\int_{T\backslash\cup_{i\in\Sigma_{n-\log n}}T(i) }\left|f(x)\right|\mu_n(dx)\rightarrow 0.\]
Moreover, since $b_n^{-1}\mu_n$ is a probability measure, we find that
\[b_n^{-1}\left|\int_{\cup_{i\in\Sigma_{n-\log n}}T(i)}\left(f(x)-f\circ{\phi}_n(x)\right)\mu_n(dx)\right|\leq \sup_{x\in\cup_{i\in\Sigma_{n-\log n}}T(i)}\left|f(x)-f\circ{\phi}_n(x)\right|,\]
and since the diameter of each $T(i)$ is bounded above by $2\lambda^n/(1-\lambda)$ the upper bound here converges to 0. In conclusion, we have proved that
\[\lim_{n\rightarrow\infty}b_n^{-1}\mu_n(f)=\lim_{n\rightarrow\infty}c_n^{-1}2^{-(n-\log n)}{\mu}_n(f)=\lim_{n\rightarrow\infty}\tilde{\mu}_n(f)=\mu_\Sigma(f),\]
where the existence and value of the limit is a consequence of the previous part of the proof. This yields that $b_n^{-1}{\mu}_n$ converges weakly to $\mu_\Sigma$, and thus completes the argument.
\end{proof}

\section{Stochastic processes on tree-like spaces}\label{sec3}

In order to couple the random walks on the binary trees of different levels and the limiting process on $\Sigma$ that we will soon introduce, we will construct them all as time changes of a canonical diffusion on $(T,d,\mu_T)$. In this section, we will define this principal object of interest, and explain its connection to the processes that appear in the statement of Theorem \ref{main}. The conclusions of this section depend on some fundamental statements concerning stochastic processes associated with resistance forms and their time changes; for the convenience of the reader, we present versions of the major results upon which we depend in Appendix \ref{appa}.

It is well-known that a compact real tree, equipped with a finite Borel measure of full support, is naturally associated with a strong Markov process that can be considered to be the Brownian motion on the space in question. See \cite{AEW,Kigden} for comprehensive background. In particular, we denote the process corresponding to $(T,d,\mu_T)$ by $X^T=((X^T_t)_{t\geq 0},(P^T_x)_{x\in T})$. As noted in \cite[Proposition 5.1]{Kigden}, $d$ can be considered to be a resistance metric on $T$ (see Definition \ref{resdefdef} below), and so $X^T$ is the process of Theorem \ref{hunt}. As per \cite[Theorem 1]{AEW}, this process is unique up to $\mu_T$-equivalence, has continuous sample paths and is $\mu_T$-symmetric. (In fact, as noted in the appendix, it is defined uniquely from any starting point $x\in T$.) Through the standard correspondence (see \cite[Theorem 7.2.1]{FOT}, for example), $X^T$ can be characterised in terms of a Dirichlet form $(\mathcal{E}_T,\mathcal{F}_T)$ on $L^2(T,\mu_T)$ that satisfies
\[d(x,y)^{-1}:=\inf\left\{\mathcal{E}_T(f,f):\:f\in\mathcal{F}_T,\:f(x)=0,\:f(y)=1\right\},\qquad \forall x,y\in T,\:x\neq y.\]
(See the discussion following Theorem \ref{hunt} for more background on the connection between $X^T$ and $(\mathcal{E}_T,\mathcal{F}_T)$.) Moreover, one can check the following result concerning the existence of local times.

\begin{lem}\label{ltlem}
The process $X^T$ admits jointly continuous local times $(L^T_t(x))_{x\in T,\:t\geq 0}$ such that, for any $x\in T$, it $P_x^T$-a.s.\ holds that
\begin{equation}\label{odf}
\int_0^tf(X_s)ds=\int_TL^T_t(y)\mu_T(dy)
\end{equation}
for any continuous $f:T\rightarrow\mathbb{R}$.
\end{lem}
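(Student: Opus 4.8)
The plan is to establish the existence of jointly continuous local times for $X^T$ by appealing to the general theory of processes associated with resistance forms, as collected in the appendix. The key structural facts are that $(T,d)$ is a compact real tree (Lemma \ref{props}(b)), that $d$ is a resistance metric on $T$ with $(\mathcal{E}_T,\mathcal{F}_T)$ the associated resistance form, and that $\mu_T$ is a finite Borel measure of full support. Under these hypotheses, the process $X^T$ of Theorem \ref{hunt} is known to admit local times $(L^T_t(x))_{x\in T,\,t\geq 0}$ satisfying the occupation density formula \eqref{odf}; the content of the lemma is that a version of these local times can be chosen to be jointly continuous in $(t,x)$. So the first step is simply to invoke the relevant appendix statement giving existence of local times for resistance-form processes, which holds quite generally once the resistance metric induces a suitable topology and the space is, say, locally compact and separable.

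The second and main step is to verify joint continuity. The standard route here is a Kolmogorov-type continuity criterion applied to the field $x\mapsto L^T_t(x)$, for which one needs moment bounds on increments $L^T_t(x)-L^T_t(y)$ in terms of $d(x,y)$. The crucial input is the general principle — available for processes on resistance metric spaces — that the modulus of continuity of local times is controlled by the resistance metric itself: increments of local time over a pair of points at resistance distance $r$ apart have moments bounded by a power of $r$, with the relevant exponent governed by the metric entropy of the space. This is exactly where Lemma \ref{props}(a) enters: the bound $N(T,d,\varepsilon)\leq C\varepsilon^{-c}$ shows $(T,d)$ has finite box-counting dimension, which gives enough room in the chaining argument to upgrade the pointwise moment estimates to a uniform (in $x$) continuity statement, and then, combining with continuity in $t$ (which follows from the additivity and monotonicity of $t\mapsto L^T_t(x)$ together with the fact that $X^T$ has continuous paths and no holding points at single leaves), to joint continuity in $(t,x)$. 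In practice this is precisely the setting covered by the results of \cite{Cr3,CHK} and, in the form incorporating local times, \cite{Noda}, so the cleanest exposition is to state that $(T,d,\mu_T)$ satisfies the hypotheses of the pertinent theorem in \cite{Noda} (compact, resistance metric, full-support finite measure, polynomial volume/entropy growth) and read off jointly continuous local times as a direct consequence.

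I would then note that once a jointly continuous version exists, the occupation density identity \eqref{odf} holds simultaneously for all continuous $f$ and all $t$, $P^T_x$-a.s.\ for every $x\in T$ — the `for every starting point' refinement being exactly the kind of statement that the resistance-form framework provides (cf.\ the remark in the excerpt that $X^T$ is defined uniquely from any starting point). The identity for a fixed continuous $f$ and fixed $t$ is immediate from the definition of local time; extending to all $t$ simultaneously uses joint continuity of both sides in $t$, and extending to all continuous $f$ simultaneously uses that $\mu_T$ is a finite measure together with a density/approximation argument on the separable space $C(T)$.

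The step I expect to be the main obstacle is obtaining the increment moment bounds with the correct dependence on $d(x,y)$ and packaging them through a chaining argument to get a genuinely \emph{jointly} continuous modification rather than merely continuity in $x$ for fixed $t$; this is the technical heart, but it is entirely standard for resistance metric spaces of finite entropy dimension, so I would keep the proof short by citing \cite{Noda} (and \cite{Cr3,CHK}) and merely checking that the hypotheses — compactness and the entropy bound of Lemma \ref{props}(a), finiteness and full support of $\mu_T$, and the resistance-metric property of $d$ — are met here.
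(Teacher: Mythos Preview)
Your approach is correct and matches the paper's: both reduce to verifying the hypotheses of Proposition \ref{localtimes} (the result from \cite{Noda}), using compactness of $(T,d)$, the fact that $d$ is a resistance metric, full support of $\mu_T$, and the polynomial entropy bound of Lemma \ref{props}(a) to check the metric entropy integral condition. The paper's proof is simply a two-line invocation of Proposition \ref{localtimes}; your discussion of Kolmogorov-type moment bounds and chaining describes what goes into proving that proposition rather than constituting a different route.
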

\begin{proof}
Given the estimate of Lemma \ref{props}(a) (and the fact that $d$ is a resistance metric on $T$), the result follows immediately from Proposition \ref{localtimes}.
\end{proof}

Next, we proceed to introduce various time changes of $X^T$. Since the construction of these will all follow the pattern set out in Appendix \ref{appa}, we will describe the first in detail, and then simply list the remaining ones. To begin with, we introduce an additive functional $A^\Sigma=(A^\Sigma_t)_{t\geq 0}$ by setting
\begin{equation}\label{asig}
A^\Sigma_t:=\int_TL_t^T(x)\mu_\Sigma(dx),
\end{equation}
where $\mu_\Sigma$ was introduced at \eqref{mus}. Writing
\begin{equation}\label{alphasig}
\alpha^\Sigma(t):=\inf\left\{s\geq 0:\:A^\Sigma_s>t\right\}
\end{equation}
for its right-continuous inverse, we then set
\begin{equation}\label{xsigdef}
X^\Sigma_t:=X^T_{\alpha^\Sigma(t)}.
\end{equation}
By construction, we have that $X^\Sigma_0$ is distributed as the location of $X^T$ at the hitting time of $\Sigma$, which by the symmetry of the model is readily checked to have law $\mu_\Sigma$ under $P_\rho^T$. Moreover, by the trace theorem for Dirichlet forms \cite[Theorem 6.2.1]{FOT} (see Theorem \ref{timechangethm} below for the particular version that we apply here), the above definitions yield a strong Markov process $X^\Sigma=((X^\Sigma_t)_{t\geq 0},(P^\Sigma_x)_{x\in \Sigma})$, whose Dirichlet form $(\mathcal{E}_\Sigma,\mathcal{F}_\Sigma)$ on $L^2(\Sigma,\mu_\Sigma)$ is the trace of $(\mathcal{E}_T,\mathcal{F}_T)$ onto the latter space, i.e.\
\[\mathcal{F}_\Sigma:=\left\{f|_\Sigma:\:f\in \mathcal{F}_T\right\},\]
\begin{equation}\label{xsigform}
\mathcal{E}_\Sigma(f,f):=\inf\left\{\mathcal{E}_T(g,g):\:g\in\mathcal{F}_T:\:g|_\Sigma=f\right\}.
\end{equation}
We note that, by \cite[Theorem 8.4]{Kigres} (see Theorem \ref{timechangethm}(a)), the quadratic form $(\mathcal{E}_\Sigma,\mathcal{F}_\Sigma)$ is a resistance form in the sense of Definition \ref{resformdef}, and its associated resistance metric (see Definition \ref{resdefdef} and Theorem \ref{hunt}(a)) is the restriction of $d$ to $\Sigma$. Given the identification of $(\Sigma,d)$ with the middle-thirds Cantor set of Lemma \ref{props}(e), it is possible to consider $X^\Sigma$ to be a Markov process on this Cantor set. It is easy to check that the Dirichlet form $(\mathcal{E}_\Sigma,\mathcal{F}_\Sigma)$ is non-local, and so $X^\Sigma$ admits jumps; in fact, it is a pure jump process (as follows from \cite[Theorem 5.6]{Kigbound}). Various properties of $X^\Sigma$, such as estimates on its heat kernel, jump kernel and the moments of its displacement are given in \cite{Kigbound}. See also \cite{Baxter} for an earlier work in this direction. We summarise some basic properties of $X^T$ and $X^\Sigma$ in the following lemma.

\begin{lem}
(a) The process $X^T=((X^T_t)_{t\geq 0},(P^T_x)_{x\in T})$ associated with the measured resistance metric space $(T,d,\mu_T)$ is a strong Markov process that takes values in $C([0,\infty),T)$ (i.e.\ the space of continuous paths on $T$ equipped with the local uniform topology) and admits jointly continuous local times $(L^T_t(x))_{x\in T,\:t\geq 0}$ satisfying \eqref{odf}.\\
(b) The process $X^\Sigma=((X^\Sigma_t)_{t\geq 0},(P^\Sigma_x)_{x\in \Sigma})$ defined at \eqref{xsigdef} is the strong Markov process associated with the measured resistance metric space $(\Sigma,d,\mu_\Sigma)$. It is a pure jump process taking values in $D([0,\infty),\Sigma)$, i.e.\ the space of c\`{a}dl\`{a}g paths on $\Sigma$ equipped with the Skorohod $J_1$ topology.
\end{lem}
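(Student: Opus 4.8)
The plan is to assemble part (a) from results already established in the excerpt, and to obtain part (b) from the time-change (trace) machinery recalled in the appendix, together with the identification of the limit space with the middle-thirds Cantor set.

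For part (a), Lemma~\ref{props}(b) gives that $(T,d)$ is a compact real tree, $\mu_T$ is by construction a Borel probability measure of full support, and $d$ is a resistance metric on $T$ by \cite[Proposition 5.1]{Kigden}. Hence Theorem~\ref{hunt} applies and produces the $\mu_T$-symmetric strong Markov process $X^T=((X^T_t)_{t\geq 0},(P^T_x)_{x\in T})$ associated with $(T,d,\mu_T)$, defined from every starting point, with Dirichlet form $(\mathcal{E}_T,\mathcal{F}_T)$ characterised by $d$ as in the main text. Continuity of the sample paths, hence that $X^T$ takes values in $C([0,\infty),T)$ with the local uniform topology, is part of \cite[Theorem 1]{AEW}. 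Finally, the existence of jointly continuous local times satisfying the occupation-density identity \eqref{odf} is precisely the content of Lemma~\ref{ltlem}. This settles (a).

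For part (b), I would first check that the functional $A^\Sigma$ of \eqref{asig} is a positive continuous additive functional of $X^T$ with Revuz measure $\mu_\Sigma$: continuity and monotonicity in $t$ come from the joint continuity of $(L^T_t(x))$ and finiteness of $\mu_\Sigma$, additivity from that of the local times, and the Revuz correspondence from \eqref{odf}. One also needs $A^\Sigma_\infty=\infty$, $P^T_x$-a.s.\ for every $x$; this holds because $X^T$ is recurrent (a symmetric process on a compact state space with an irreducible Dirichlet form), so $L^T_t(y)\to\infty$ as $t\to\infty$ for each fixed $y$, while $\mu_\Sigma$ charges $\Sigma$. Granted this, the trace/time-change theorem (Theorem~\ref{timechangethm}) identifies the time-changed process $X^\Sigma_t=X^T_{\alpha^\Sigma(t)}$ of \eqref{xsigdef} with the strong Markov process associated with the trace of $(\mathcal{E}_T,\mathcal{F}_T)$ onto $L^2(\Sigma,\mu_\Sigma)$, namely $(\mathcal{E}_\Sigma,\mathcal{F}_\Sigma)$ of \eqref{xsigform}; by \cite[Theorem 8.4]{Kigres} (Theorem~\ref{timechangethm}(a)) this is a resistance form whose resistance metric is $d|_\Sigma$, so $X^\Sigma$ is indeed the process associated with the measured resistance metric space $(\Sigma,d,\mu_\Sigma)$, and it is defined from every starting point in $\Sigma$. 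For the path regularity, non-locality of $(\mathcal{E}_\Sigma,\mathcal{F}_\Sigma)$ and the fact that $X^\Sigma$ is a pure jump process follow from \cite[Theorem 5.6]{Kigbound} (one may alternatively note that $\Sigma$ is totally disconnected, so a process with continuous paths would be constant). Since $A^\Sigma$ is continuous and non-decreasing, its inverse $\alpha^\Sigma$ of \eqref{alphasig} is non-decreasing and right-continuous and jumps precisely across the (at most countably many) intervals on which $A^\Sigma$ is flat, i.e.\ the excursion intervals of $X^T$ away from $\Sigma$; composing with the continuous path $X^T$ gives that $X^\Sigma$ is right-continuous, and at a jump time $t$ the left limit $X^\Sigma_{t-}=X^T_{\alpha^\Sigma(t-)}$ exists and lies in $\Sigma$, because $\alpha^\Sigma(t-)$ is the left endpoint of such an excursion. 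Hence $X^\Sigma\in D([0,\infty),\Sigma)$, which we equip with the Skorohod $J_1$ topology.

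I expect the main obstacle to be the bookkeeping around the time change: confirming that $A^\Sigma$ genuinely is a PCAF with Revuz measure $\mu_\Sigma$ and that $A^\Sigma_\infty=\infty$, and then invoking the trace theorem in exactly the form that both identifies the limiting resistance form (with metric $d|_\Sigma$) and guarantees a well-defined process from every starting point. The ``pure jump'' assertion is the part most reliant on an external black box (\cite{Kigbound}); everything else is either already proved in the excerpt or a short consequence of the resistance-form theory recalled in the appendix.
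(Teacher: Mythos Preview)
Your proposal is correct and follows exactly the approach of the paper: in fact, the paper does not supply a separate proof for this lemma at all, presenting it explicitly as a summary of properties already justified in the surrounding discussion (the references to \cite{AEW,Kigden}, Lemma~\ref{ltlem}, Theorem~\ref{timechangethm}, \cite[Theorem~8.4]{Kigres} and \cite[Theorem~5.6]{Kigbound} that you invoke are precisely those cited in the text immediately before the lemma). If anything, you give more detail than the paper, e.g.\ the verification that $A^\Sigma$ is a PCAF with $A^\Sigma_\infty=\infty$ and the explicit argument for c\`{a}dl\`{a}g paths.
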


For our purposes, it is further useful to observe that, since Lemma \ref{props}(a) holds with $T$ replaced by $\Sigma$, by applying the same argument as used to prove Lemma \ref{ltlem} (i.e.\ an appeal to Proposition \ref{localtimes} below), we can conclude that $X^\Sigma$ admits jointly continuous local times $(L^\Sigma_t(x))_{x\in \Sigma,\:t\geq 0}$ satisfying the analogue to \eqref{odf}. Although we will not need it here, we note that, assuming $X^\Sigma$ is coupled with $X^T$ as above, these can be obtained from those for $X^T$ through the identity
\begin{equation}\label{ltequiv}
L^\Sigma_t(x)=L^T_{\alpha^\Sigma(t)}(x),\qquad \forall x\in \Sigma,
\end{equation}
cf.\ \cite[Lemma 3.4]{Cr1}. Applying the fact that $P_x^\Sigma(L_t^\Sigma(x)>0)=1$ for all $x\in \Sigma$ (which is a defining property of local times, see \cite[Definition 4.10]{Noda}, for example), the joint continuity of the local times $L^\Sigma$, the finiteness of hitting times of points by $X^\Sigma$ (as follows from the commute time identity, see \cite[(2.17)]{CHK}), and the compactness of the space, one may argue exactly as in the proof of \cite[Lemma 2.3]{Cr2} to deduce that: for every $x\in \Sigma$, it $P_x^\Sigma$-a.s.\ holds that
\begin{equation}\label{ltdiv}
\lim_{t\rightarrow\infty}\inf_{y\in \Sigma}L^\Sigma_t(y)=\infty.
\end{equation}
In particular, it follows that $\inf_{y\in \Sigma}L^\Sigma_t(y)>0$ for large $t$, $P_x^\Sigma$-a.s. Together with the fact that $X^\Sigma$ has c\`{a}dl\`{a}g sample paths in $\Sigma$, we readily obtain the following result, which gives the non-triviality of the limiting random variable that appears in the statement of Theorem \ref{main}.

\begin{lem}\label{tcovfin} For any $x\in\Sigma$, the random variable $\bar{\tau}_{\mathrm{cov}}(X^\Sigma)$ defined at \eqref{otau} takes values in $(0,\infty)$, $P_x^\Sigma$-a.s.
\end{lem}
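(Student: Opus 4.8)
The plan is to deduce both the positivity and finiteness of $\bar{\tau}_{\mathrm{cov}}(X^\Sigma)$ from the properties of $X^\Sigma$ collected immediately above, namely the joint continuity of the local times $L^\Sigma$, the divergence \eqref{ltdiv}, the c\`{a}dl\`{a}g property of the sample paths in $\Sigma$, and the compactness of $(\Sigma,d)$. For the positivity, I would argue that $X^\Sigma$ cannot cover $\Sigma$ instantaneously: since $X^\Sigma$ is c\`{a}dl\`{a}g, $\overline{X^\Sigma_{[0,t]}}$ is, for small $t$, contained in a small neighbourhood of the starting point $x$ (formally, for any $\varepsilon>0$ there is, $P_x^\Sigma$-a.s., a $\delta>0$ with $d(X^\Sigma_s,x)\leq\varepsilon$ for all $s\leq\delta$, using right-continuity at $0$), and since $\Sigma$ is not a single point, taking $\varepsilon$ smaller than $\mathrm{diam}(\Sigma)$ shows $\overline{X^\Sigma_{[0,\delta]}}\neq\Sigma$. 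Hence $\bar{\tau}_{\mathrm{cov}}(X^\Sigma)\geq\delta>0$, $P_x^\Sigma$-a.s.

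For the finiteness, I would use \eqref{ltdiv} to produce, $P_x^\Sigma$-a.s., a (random) finite time $S$ with $\inf_{y\in\Sigma}L^\Sigma_S(y)>0$. The key point is that $\{y\in\Sigma:\:L^\Sigma_S(y)>0\}\subseteq\overline{X^\Sigma_{[0,S]}}$: indeed, if $y\notin\overline{X^\Sigma_{[0,S]}}$, then by compactness and the c\`{a}dl\`{a}g property there is an open neighbourhood $U$ of $y$ disjoint from $X^\Sigma_{[0,S]}$, so the occupation density identity (the analogue of \eqref{odf} for $X^\Sigma$) gives $\int_U L^\Sigma_S(z)\,\mu_\Sigma(dz)=\int_0^S \mathbf{1}_{U}(X^\Sigma_s)\,ds=0$, and since $\mu_\Sigma$ has full support on $\Sigma$ and $L^\Sigma_S$ is continuous, this forces $L^\Sigma_S(y)=0$. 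Combining, $\inf_{y\in\Sigma}L^\Sigma_S(y)>0$ implies every $y\in\Sigma$ lies in $\overline{X^\Sigma_{[0,S]}}$, i.e.\ $\overline{X^\Sigma_{[0,S]}}=\Sigma$, whence $\bar{\tau}_{\mathrm{cov}}(X^\Sigma)\leq S<\infty$, $P_x^\Sigma$-a.s.

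The main obstacle — really the only non-cosmetic point — is the inclusion $\{L^\Sigma_S>0\}\subseteq\overline{X^\Sigma_{[0,S]}}$, which is where the pure-jump nature of $X^\Sigma$ needs care: unlike for a continuous process, the range $X^\Sigma_{[0,S]}$ is not closed, so one must pass to its closure and use the occupation density formula together with the full support of $\mu_\Sigma$ and the continuity of $x\mapsto L^\Sigma_S(x)$ to rule out $y$ sitting in a $\mu_\Sigma$-null gap of the (closed) range. I would also note that this is precisely the subtlety flagged in the Remark following Theorem \ref{main}: one obtains $\overline{X^\Sigma_{[0,S]}}=\Sigma$ rather than $X^\Sigma_{[0,S]}=\Sigma$, which is why $\bar{\tau}_{\mathrm{cov}}$ is defined via the closure. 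Everything else (measurability of $S$, the a.s.\ statements, the reduction to a fixed deterministic time when convenient) is routine.
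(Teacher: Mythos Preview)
Your proposal is correct and follows essentially the same approach as the paper, which in fact leaves the proof implicit in the paragraph preceding the lemma (deducing finiteness from \eqref{ltdiv} and positivity from the c\`{a}dl\`{a}g property). You have supplied more detail than the paper does, in particular the justification of the inclusion $\{L^\Sigma_S>0\}\subseteq\overline{X^\Sigma_{[0,S]}}$ via the occupation density formula and the full support of $\mu_\Sigma$; this is a genuine point that the paper glosses over. One cosmetic quibble: for the positivity argument, ``$\varepsilon$ smaller than $\mathrm{diam}(\Sigma)$'' does not quite guarantee $\bar{B}(x,\varepsilon)\neq\Sigma$; it is cleaner to take $\varepsilon<d(x,y)$ for some fixed $y\neq x$.
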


As noted above, in the proof of our main result, we will also consider various other time changes of $X^T$. We summarise these in the table below. See also Figure \ref{links} for an overview of how these are connected. Each is obtained as a time change of $X^T$ via an additive functional of the form \eqref{asig}, where the relevant measure is the one appearing in the second column of the table. Specifically, we set
\[A^n_t:=\int_{T_n}L_t^T(x)\mu_n(dx),\qquad\bar{A}^n_t:=\int_{T_n}L_t^T(x)\bar{\mu}_n(dx),\qquad \tilde{A}^n_t:=\int_{\tilde{\Sigma}_n}L_t^T(x)\tilde{\mu}_n(dx),\]
where $\bar\mu_{n}$ is the restriction of $\mu_n$ to
\[\bar{\Sigma}_n:=\bigcup_{m=n-\log n}^n\Sigma_m.\]
(The sets $T_n$ and $\tilde{\Sigma}_n$ and measures $\mu_n$ and $\tilde{\mu}_n$ were introduced in the previous section.) The right-continuous inverses of these, which will be denoted by ${\alpha}^{n}$, $\bar{\alpha}^{n}$, $\tilde{\alpha}^{n}$, respectively, are defined as at \eqref{alphasig}, and the processes $X^n$, $\bar{X}^n$, $\tilde{X}^n$ similarly to \eqref{xsigdef}. By Theorem \ref{timechangethm}, each of these processes can be associated with a measured resistance metric space, as per the following table.
\smallskip

\begin{center}
\begin{tabular}{|c|c|c|c|c|}
   Process & Space & \begin{tabular}{c}Additive\\functional\end{tabular}& Inverse & Description\\
  \hline
  $X^T$ & $(T,d,\mu_T)$ & - & - & Brownian motion on $T$ \\
  $X^\Sigma$ & $(\Sigma,d,\mu_\Sigma)$ & $A^\Sigma$ & $\alpha^\Sigma$ & Jump process on Cantor set $\Sigma$ \\
  $X^n$ & $(T_n,d,\mu_n)$ & $A^n$ & $\alpha^n$ & $\lambda$-biased walk on $T_n$\\
  $\bar X^{n}$ & $(\bar{\Sigma}_n,d,\bar\mu_{n})$ & $\bar{A}^{n}$ & $\bar{\alpha}^{n}$ & $X^n$ observed on $\bar\Sigma_n$\\
  $\tilde X^{n}$ & $({\tilde{\Sigma}_n},d,\tilde{\mu}_n)$ & $\tilde{A}^{n}$ & $\tilde{\alpha}^{n}$ & Discrete approximation to $X^\Sigma$\\
\end{tabular}
\end{center}
\smallskip

\begin{figure}[t]
\begin{center}
 \begin{tikzcd}[row sep=+45pt, column sep =80pt]
        X^T \arrow[r, "\mbox{\scriptsize{$A^\Sigma$, \eqref{xsigdef}}}"] & X^\Sigma\arrow[r, shift left,"\mbox{\scriptsize{$\check{A}^n$, \eqref{checkan}}}"]\arrow[d,shift left, densely dotted,dash,"\mbox{\scriptsize{Lemma \ref{lem44}}}"] &\tilde{X}^n \arrow[dl,densely dotted,dash,"\mbox{\scriptsize{Lemma \ref{lem43}}}"]\arrow[l,shift left, densely dotted,dash,"\mbox{\scriptsize{Lemma \ref{lem46}}}"]\\
        X^n \arrow[r,shift left, "\mbox{\scriptsize{$\hat{A}^n$, \eqref{hatadef}}}"] & \bar{X}^n\arrow[u,shift left, "\mbox{\scriptsize{Lemma \ref{lem41}}}"]\arrow[l,shift left,densely dotted,dash,"\mbox{\scriptsize{Lemma \ref{lem42}}}"]
    \end{tikzcd}
    \end{center}
 \caption{Connections between the processes introduced in Section \ref{sec3}. The solid arrows show links between processes, either by construction or convergence. (We omit the arrows corresponding to $\bar{A}^n$ and $\tilde{A}^n$, which link $X^T$ to $\bar{X}^n$ and $\tilde{X}^n$, respectively, and also the convergence of $\tilde{X}^n$ to $X^\Sigma$, as given by Lemma \ref{lem41}.) The dotted lines represent links between the cover times of the various processes, as proved in Section \ref{sec4}. Specifically, Lemma \ref{lem42} shows the rescaled cover times of $X^n$ and $\bar{X}^n$ are close, Lemmas \ref{lem43} and \ref{lem44} demonstrate the rescaled cover time of $\bar{X}^n$ is asymptotically between that of $\tilde{X}^n$ and ${X}^\Sigma$, and Lemma \ref{lem46} establishes that the cover time of $\tilde{X}^n$ converges to that of ${X}^\Sigma$.}\label{links}
\end{figure}

To complete this section, we discuss a few properties of $X^n$, $\bar X^{n}$ and $\tilde X^{n}$ that we will need later. Firstly, from Remark \ref{resrem}, we see that $X^n$ is the continuous-time Markov process on $T_n$ with generator given by
\[\Delta_nf(x)=\sum_{y\in T_n}\frac{c_n(x,y)}{\mu_n(\{x\})}\left(f(y)-f(x)\right),\qquad \forall f:T_n\rightarrow \mathbb{R},\:x\in  T_n,\]
where $(c_n(x,y))_{x,y\in T_n}$ are the conductances associated with the resistance metric $d|_{T_n\times T_n}$. In particular, if $\{x,y\}$ is an edge in the graph $T_n$, then $c_n(x,y)=d(x,y)^{-1}$, and $c_n(x,y)=0$ otherwise. Similarly, $\bar X^{n}$ is the continuous-time Markov chain on $\bar{\Sigma}_n$ with generator given by
\[\bar{\Delta}_nf(x)=\sum_{y\in \bar{\Sigma}_n}\frac{\bar{c}_n(x,y)}{\bar{\mu}_n(\{x\})}\left(f(y)-f(x)\right),\qquad \forall f:\bar{\Sigma}_n\rightarrow \mathbb{R},\:x\in  \bar{\Sigma}_n,\]
where $(\bar{c}_n(x,y))_{x,y\in T_n}$ are the conductances associated with the resistance metric $d|_{\bar{\Sigma}_n\times \bar{\Sigma}_n}$. Note that these are the same as ${c}_n(x,y)$ for pairs of vertices connected by an edge in $T_n$, but will also be not identically zero for pairs of vertices in $\Sigma_{n-\log n}$. Indeed, the conductances between vertices in $\Sigma_{n-\log n}$ are determined by the part of the electrical network on $T_{n-\log n}$ with conductances given by $(c_n(x,y))_{x,y\in T_{n-\log n}}$. Moreover, since $\bar{\Sigma}_n\subseteq T_n$ and the resistance metrics and defining measures of $X^n$ and $\bar X^{n}$ agree on $\bar{\Sigma}_n$, by a further application of Theorem \ref{timechangethm}, we can obtain $\bar X^{n}$ as a time change of $X^{n}$ via the additive functional
\begin{equation}\label{hatadef}
\hat{A}^n_t=\int_0^t\mathbf{1}_{\{X^n_s\in\bar{\Sigma}_n\}}ds.
\end{equation}
That is, writing $\hat\alpha^n$ for the right-continuous inverse of $\hat{A}^n$, under $P^T_{\rho}$, we have that $(X^{n}_{\hat\alpha^n(t)})_{t\geq 0}$ is distributed as $\bar{X}^n$, started from a uniform point in $\Sigma_{n-\log n}$. (In fact, since it is the case that, started from a vertex $x\in \Sigma_{n-\log n}$, the process $X^n$ can hit any other vertex $y\in \Sigma_{n-\log n}$ without passing through the set $\bar{\Sigma}_n\backslash \{x,y\}$, the conductances $\bar{c}_n(x,y)$ will be non-zero for any pair $x,y\in \Sigma_{n-\log n}$.) Finally, we have that $\tilde{X}^n$ is the continuous-time Markov process on $\tilde{\Sigma}_n$ with generator given by
\[\tilde{\Delta}_nf(x)=\sum_{y\in \tilde{\Sigma}_n}\frac{\tilde{c}_n(x,y)}{\tilde{\mu}_n(\{x\})}\left(f(y)-f(x)\right),\qquad \forall f:\tilde{\Sigma}_n\rightarrow\mathbb{R},\:x\in  \tilde{\Sigma}_n,\]
where $(\tilde{c}_n(x,y))_{x,y\in T_n}$ are the conductances associated with the resistance metric $d|_{\tilde{\Sigma}_n\times \tilde{\Sigma}_n}$. In this case, since $\tilde{\Sigma}_n\subseteq \Sigma$ and the resistance metrics of $\tilde{X}^n$ and $X^\Sigma$ agree on $\tilde{\Sigma}_n$, we have from Theorem \ref{timechangethm} that $\tilde{X}^n$ can be given as a time change of $X^\Sigma$ via the additive functional
\begin{equation}\label{checkan}
\check{A}^n_t=\int_{\tilde{\Sigma}_n}L^\Sigma_t(x)\tilde\mu_n(dx),
\end{equation}
where $(L^\Sigma_t(x))_{x\in \Sigma,\:t\geq 0}$ are the local times of $X^\Sigma$, as defined above. Specifically, writing $\check\alpha^n$ for the right-continuous inverse of $\check{A}^n$, for any $x\in\tilde{\Sigma}_n$, we have that the law of $X^\Sigma_{\check\alpha^n(t)}$ under $P^\Sigma_x$ is the same as that of $\tilde{X}^n$, also started from $x$. The fact that $\tilde{X}^n$ can be embedded within $X^\Sigma$ in this way will be crucial in our later argument. (See Lemma \ref{lem46} in particular.)

\section{Proof of main result}\label{sec4}

The purpose of this section is prove Theorem \ref{main}. To this end, we will prove and then combine a number of lemmas about the processes introduced in the previous section. (To understand the basic structure of the argument, it might be helpful to refer to Figure \ref{links}.) Throughout, we will suppose that all the processes are coupled, each being constructed from the same realisation of $X^T$ by an appropriate time change. Moreover, unless otherwise noted, we will assume $X^T$ is started from $\rho=\emptyset$.

We start with a result that provides the link that we will require between the continuous-time Markov chains $\bar{X}^n$ and $\tilde{X}^n$ and limiting jump process $X^\Sigma$. It also gives some basic control on the time-scale of the cover time of $\bar{X}^n$. For the statement, we write
\[\bar{L}^n_{t}(x)=\frac{1}{\bar{\mu}^n(\{x\})}\int_{0}^t\mathbf{1}_{\{\bar{X}^n_s=x\}}ds,\qquad \tilde{L}^n_{t}(x)=\frac{1}{\tilde{\mu}^n(\{x\})}\int_{0}^t\mathbf{1}_{\{\tilde{X}^n_s=x\}}ds,\]
for the local times of the discrete-space processes. The convergence of the processes will be given in $D(\mathbb{R}_+,T)$, the space of c\`{a}dl\`{a}g functions on $T$ equipped with the usual Skorohod $J_1$-topology, and the convergence of local times will be given in $\hat{C}(T\times \mathbb{R}_+,\mathbb{R})$, the space of continuous functions from a closed subset of $T$ to $C(\mathbb{R}_+,\mathbb{R})$ equipped with the compact-convergence topology with variable domains of \cite[Definition 2.19]{Noda}. In particular, if $T^{(i)}$ are closed subsets of $T$ and $L^{(i)}:T^{(i)}\rightarrow C(\mathbb{R}_+,\mathbb{R})$ are continuous functions, $i=1,2$, then the distance between $L^{(1)}$ and $L^{(2)}$ in $\hat{C}(T\times \mathbb{R}_+,\mathbb{R})$ is given by the infimum over $\varepsilon$ such that: for each $x\in T^{(1)}$, there exists an element $y\in T^{(2)}$ satisfying
\begin{equation}\label{hatCdef}
\max\left\{d(x,y),d_{C(\mathbb{R}_+,\mathbb{R})}\left(L^{(1)}(x),L^{(2)}(y)\right)\right\}\leq \varepsilon,
\end{equation}
where $d_{C(\mathbb{R}_+,\mathbb{R})}$ is a metrisation of the topology of local uniform convergence, and, similarly, for each $y\in T^{(2)}$, there exists an element $x\in T^{(1)}$ satisfying the same bound.

\begin{lem}\label{lem41} (a) Under $P^T_\rho$, the law of
\[\left(\left(\bar{X}^n_{b_nt}\right)_{t\geq 0},\left(\bar{L}^n_{b_nt}(x)\right)_{x\in \bar{\Sigma}_n,\:t\geq 0}\right),\]
and also that of
\[\left(\left(\tilde{X}^n_t\right)_{t\geq 0},\left(\tilde{L}^n_{t}(x)\right)_{x\in \tilde{\Sigma}_n,\:t\geq 0}\right),\]
converges to that of $(X^\Sigma,L^\Sigma)$ in $D(\mathbb{R}_+,T)\times \hat{C}(T\times \mathbb{R}_+,\mathbb{R})$.\\
(b) Under $P^T_\rho$, the laws of $(b_n^{-1}\tau_{\mathrm{cov}}(\bar{X}^n))_{n\geq 1}$ form a tight sequence.
\end{lem}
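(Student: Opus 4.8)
The plan for part (a) is to realise the (rescaled) processes as time changes of $X^T$ by suitably rescaled measures, and then to invoke the convergence theorem for such time changes, with local times, from \cite{Noda}. Since $\bar X^n$ is the time change of $X^T$ by $\bar\mu_n$ (via Theorem \ref{timechangethm}), running its clock at constant speed $b_n^{-1}$ amounts to replacing $\bar\mu_n$ by $b_n^{-1}\bar\mu_n$; thus $((\bar X^n_{b_nt})_{t\ge0},(\bar L^n_{b_nt}(x))_{x\in\bar\Sigma_n,\,t\ge0})$ is the process-with-local-times associated with the measured resistance metric space $(\bar\Sigma_n,d,b_n^{-1}\bar\mu_n)$, realised inside $(T,d)$, and, $\tilde\mu_n$ being already a probability measure, $(\tilde X^n,\tilde L^n)$ is associated with $(\tilde\Sigma_n,d,\tilde\mu_n)$ with no rescaling. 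It then remains to check the hypotheses of the cited theorem, all of which are supplied by the preliminaries: (i) the supports converge in the Hausdorff distance $d_H$ of $(T,d)$, since \eqref{dxy} gives $d_H(\bar\Sigma_n,\Sigma)\le\lambda^{n-\log n}/(1-\lambda)$ and $d_H(\tilde\Sigma_n,\Sigma)\le 2\lambda^n/(1-\lambda)$, both $\to0$; (ii) the measures converge weakly to $\mu_\Sigma$, which is Lemma \ref{lem22}(b) for $\tilde\mu_n$, while for $b_n^{-1}\bar\mu_n$ it follows since this measure differs from $b_n^{-1}\mu_n$ by one of total mass at most $b_n^{-1}\mu_n(\cup_{m=0}^{n-\log n}\Sigma_m)\to0$ (Lemma \ref{lem22}(a)); (iii) the uniform regularity making the local times jointly continuous and equicontinuous, which comes from Lemma \ref{props}(a) (cf.\ Proposition \ref{localtimes}), since $N(\bar\Sigma_n,d,\varepsilon)$ and $N(\tilde\Sigma_n,d,\varepsilon)$ are both at most $N(T,d,\varepsilon)\le C\varepsilon^{-c}$, uniformly in $n$; and (iv) the initial distributions converge weakly — in the coupling, $\bar X^n_0$ and $\tilde X^n_0$ are the locations at which $X^T$ first reaches $\bar\Sigma_n$, respectively $\tilde\Sigma_n$, which by the symmetry of the tree are distributed as the uniform law on $\Sigma_{n-\log n}$, respectively $\tilde\mu_n$, both converging weakly to $\mu_\Sigma$. (For $\bar X^n$ the initial points in fact converge $P^T_\rho$-a.s., since the hitting times of the levels $\Sigma_m$ increase to the hitting time of $\Sigma$ and $X^T$ is continuous.) Granting the theorem, this delivers the stated convergence in $D(\mathbb{R}_+,T)\times\hat C(T\times\mathbb{R}_+,\mathbb{R})$.

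For part (b), I would use the identity $\tau_{\mathrm{cov}}(\bar X^n)=\bar A^n_{\tau^n}$, where $\bar A^n_t=\int_{\bar\Sigma_n}L^T_t(x)\bar\mu_n(dx)$ and $\tau^n$ is the first time $X^T$ has visited every vertex of $\bar\Sigma_n$: indeed, $\bar A^n$ increases only while $X^T$ occupies a point of $\bar\Sigma_n$, so the range of $\bar X^n$ up to time $t$ equals $X^T_{[0,\bar\alpha^n(t)]}\cap\bar\Sigma_n$, and unwinding the right-continuous inverse $\bar\alpha^n$ (using continuity of $\bar A^n$) yields the identity. Next, $X^T$ $P^T_\rho$-a.s.\ covers the compact real tree $(T,d)$ in a finite time $\tau^\star$: by \eqref{ltequiv} and \eqref{ltdiv} we have $\inf_{y\in\Sigma}L^T_s(y)\to\infty$ as $s\to\infty$, so $X^T$ eventually visits all of $\Sigma$, and a compact connected (hence geodesically convex) subset of $T$ containing $\Sigma$ must equal $T$. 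Since $\bar\Sigma_n\subseteq T$ gives $\tau^n\le\tau^\star$, and $\bar\mu_n(\bar\Sigma_n)\le\mu_n(T_n)=b_n$, we obtain
\[
b_n^{-1}\tau_{\mathrm{cov}}(\bar X^n)=b_n^{-1}\int_{\bar\Sigma_n}L^T_{\tau^n}(x)\bar\mu_n(dx)\le\sup_{y\in T}L^T_{\tau^\star}(y),
\]
and the right-hand side is a single, $n$-independent random variable that is a.s.\ finite (a jointly continuous function maximised over the compact set $T\times[0,\tau^\star]$). Tightness of $(b_n^{-1}\tau_{\mathrm{cov}}(\bar X^n))_{n\ge1}$ is then immediate.

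The main obstacle is the appeal to \cite{Noda} in part (a): one must verify that the concrete data above genuinely constitute convergence of the measured resistance metric spaces in the topology used there (with variable domains, in the sense of \eqref{hatCdef}), that the uniform bound of Lemma \ref{props}(a) indeed yields the equicontinuity of local times needed for convergence in $\hat C(T\times\mathbb{R}_+,\mathbb{R})$, and that the theorem accommodates the weakly convergent sequences of initial distributions. Part (b), by contrast, is an essentially deterministic estimate once the identity $\tau_{\mathrm{cov}}(\bar X^n)=\bar A^n_{\tau^n}$ and the a.s.\ finiteness of $\tau^\star$ are in hand.
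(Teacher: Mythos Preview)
Your argument for part (a) is essentially identical to the paper's: you verify Hausdorff convergence of $\bar\Sigma_n$ and $\tilde\Sigma_n$ to $\Sigma$, weak convergence of $b_n^{-1}\bar\mu_n$ and $\tilde\mu_n$ to $\mu_\Sigma$ (via Lemma~\ref{lem22}), and the uniform covering-number bound inherited from Lemma~\ref{props}(a), then invoke \cite[Theorem~1.8]{Noda}. The paper does not separately discuss the initial distributions, but your observation that they converge is correct and harmless. Your closing caveat about matching the Gromov--Hausdorff framework of \cite{Noda} is exactly what the paper also addresses, noting that the identity embedding into $(T,d)$ suffices.

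For part (b), however, you take a genuinely different route from the paper. The paper uses part (a): it applies Skorohod representation to get almost-sure convergence of the local times, deduces $\inf_{x\in\bar\Sigma_n}\bar L^n_{b_nt}(x)\to\inf_{x\in\Sigma}L^\Sigma_t(x)$ uniformly on compacts, and then appeals to \eqref{ltdiv} to find a finite $t$ with $\inf_{x\in\bar\Sigma_n}\bar L^n_{b_nt}(x)>0$ for large $n$, hence $\tau_{\mathrm{cov}}(\bar X^n)<b_nt$. Your argument is more elementary and does not rely on part (a) at all: working directly under the coupling $P^T_\rho$, you write $\tau_{\mathrm{cov}}(\bar X^n)=\bar A^n_{\tau^n}$, bound $\tau^n\le\tau^\star:=\tau_{\mathrm{cov}}(X^T)$, check that $\tau^\star<\infty$ almost surely (your connectedness argument that a closed connected subset of $T$ containing $\Sigma$ equals $T$ is correct, since every point of $T^o$ lies on a geodesic between two leaves), and conclude with the clean uniform bound $b_n^{-1}\tau_{\mathrm{cov}}(\bar X^n)\le\sup_{y\in T}L^T_{\tau^\star}(y)$. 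This yields not just tightness but an $n$-independent almost-sure upper bound under the original coupling, which is slightly stronger and requires less machinery. The paper's approach, on the other hand, stays within the $\bar X^n$/$X^\Sigma$ framework and previews the local-time manipulations used in the subsequent lemmas; either is perfectly adequate here.
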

\begin{proof}
From the definitions of the sets $\bar\Sigma_n$ and $\tilde{\Sigma}_n$, it is readily checked that
\[d_H\left(\bar\Sigma_n,\Sigma\right)\leq \sum_{m=n-\log n}^\infty\lambda^m\rightarrow0,\qquad d_H\left(\tilde\Sigma_n,\Sigma\right)\leq 2\sum_{m=n}^\infty\lambda^m\rightarrow0,\]
where we write $d_H$ for the Hausdorff distance between compact subsets of $T$ (with respect to the metric $d$). Moreover, we have from Lemma \ref{lem22} that $b_n^{-1}\bar\mu_n$ and $\tilde\mu_n$ both converge weakly to $\mu_\Sigma$ as $n\rightarrow\infty$. Additionally, Lemma \ref{props}(a) yields
\[N\left(\bar\Sigma_n,d,\varepsilon\right)\leq C\varepsilon^{-c},\qquad N\left(\tilde\Sigma_n,d,\varepsilon\right)\leq C\varepsilon^{-c}.\]
These assumptions allow us to apply \cite[Theorem 1.8]{Noda} to deduce part (a). We highlight that the main result of \cite{Noda} is written in the framework of Gromov-Hausdorff convergence of metric spaces, rather than Hausdorff convergence, since it is intended to handle spaces that are not necessarily embedded into a common space. However, in the proof, it is noted that Gromov-Hausdorff convergence implies the existence of a suitable embedding into a common space, and convergence is proved there. In our case, the canonical embedding of $\bar\Sigma_n$ and $\tilde{\Sigma}_n$ into $(T,d)$ by the identity map serves the desired purpose, and so we obtain the desired result.

For part (b), we start by noting that, because the topology of the convergence of part (a) is separable (see \cite[Theorem 2.28]{Noda}), we can apply the Skorohod representation theorem (see \cite[Theorem 4.30]{Kall}, for example) to couple all the random objects in question so that the convergence holds almost-surely. Recalling the definition of the topology on $\hat{C}(T\times \mathbb{R}_+,\mathbb{R})$ from its description around \eqref{hatCdef}, it is then an elementary exercise to establish that, almost-surely, for any $T>0$,
\[\lim_{\delta\rightarrow0}\limsup_{n\rightarrow\infty}\sup_{\substack{x_n\in\bar{\Sigma}_n,\:x\in \Sigma:\\d(x_n,x)\leq \delta}}\sup_{t\in[0,T]}\left|\bar{L}^n_{b_nt}(x_n)-L^\Sigma_t(x)\right|=0.\]
From this and the Hausdorff convergence of $\bar{\Sigma}_n$ to $\Sigma$, we deduce that
\[\inf_{x\in\bar{\Sigma}_n}\bar{L}^n_{b_nt}(x)\rightarrow \inf_{x\in \Sigma}L^\Sigma_t(x),\]
uniformly on compact time intervals, almost-surely. Now, we know from \eqref{ltdiv} that the right-hand side above is strictly positive for large $t$. Hence, it follows that, almost-surely, there exists a finite $t$ such that, for large $n$,
\[\inf_{x\in\bar{\Sigma}_n}\bar{L}^n_{b_nt}(x)>0.\]
Moreover, on the latter event, it holds that $\tau_{\mathrm{cov}}(\bar{X}^n)<b_nt$. In particular, we have shown that, on the coupled space,
\[\limsup_{n\rightarrow\infty }b_n^{-1}\tau_{\mathrm{cov}}(\bar{X}^n)<\infty,\]
almost-surely, and from this the tightness claim follows.
\end{proof}

\begin{rem}\label{rem42}
It is easy to see that one can not have convergence of $(X^n_{b_nt})_{t\geq0}$ to $X^\Sigma$ in $D(\mathbb{R}_+,T)$. Indeed, to travel from one side of the tree to the other, $X^n$ must pass through the root, and this has a strictly positive distance from $\Sigma$. However, since we have from Lemma \ref{lem22} that $b_n^{-1}\mu_n\rightarrow\mu_\Sigma$, by suitably adapting \cite{Cr3} along the lines of \cite[Theorem 1(ii)]{ALW}, we believe that it should be possible to check the finite-dimensional distributions converge.
\end{rem}

The next lemma in the sequence gives that $\tau_{\mathrm{cov}}(\bar{X}^n)$ is a good approximation for $\tau_{\mathrm{cov}}({X}^n)$.

\begin{lem}\label{lem42} For any $\varepsilon>0$,
\[P^T_{\rho}\left(b_n^{-1}\left|\tau_{\mathrm{cov}}({X}^n)-\tau_{\mathrm{cov}}(\bar{X}^n)\right|>\varepsilon\right)\rightarrow 0.\]
\end{lem}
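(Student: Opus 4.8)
The plan is to show that the time $X^n$ spends outside $\bar{\Sigma}_n$ before covering $T_n$ is negligible on the scale $b_n$, and that the extra time needed to cover the `thin' part $T_n \setminus \bar{\Sigma}_n = \cup_{m=0}^{n-\log n - 1}\Sigma_m$ is also negligible. Recall from \eqref{hatadef} that $\hat{A}^n_t = \int_0^t \mathbf{1}_{\{X^n_s \in \bar{\Sigma}_n\}}\,ds$ and that $(X^n_{\hat\alpha^n(t)})_{t\geq 0}$ is distributed as $\bar{X}^n$. Consequently, at the (random) time $t = \tau_{\mathrm{cov}}(\bar{X}^n)$ at which $\bar{X}^n$ has covered $\bar{\Sigma}_n$, the original process $X^n$ has covered at least $\bar{\Sigma}_n$, and has run for real time $\hat\alpha^n(\tau_{\mathrm{cov}}(\bar X^n)) = \tau_{\mathrm{cov}}(\bar X^n) + R_n$, where $R_n := \int_0^{\hat\alpha^n(\tau_{\mathrm{cov}}(\bar X^n))}\mathbf{1}_{\{X^n_s\notin\bar{\Sigma}_n\}}\,ds$ is the time spent in the thin part up to then. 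After this time, $X^n$ still needs to visit the finitely-described remaining vertices in $T_n\setminus\bar{\Sigma}_n$, which takes an additional amount of time $R_n'$. Thus $|\tau_{\mathrm{cov}}(X^n) - \tau_{\mathrm{cov}}(\bar X^n)| \leq R_n + R_n'$, and it suffices to show $b_n^{-1}R_n \to 0$ and $b_n^{-1}R_n' \to 0$ in probability.

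First I would control $R_n$. Using the occupation-density formula \eqref{odf} for $X^T$ (from which all the time changes are built) together with \eqref{ltequiv}-style identities, the time $X^n$ spends in $T(i)$ for $i\in\Sigma_{n-\log n}$ during any interval is governed by $L^T$ evaluated on that subtree weighted by $\mu_n|_{T(i)\setminus\bar\Sigma_n}$; since by Lemma \ref{lem22}(a) we have $b_n^{-1}\mu_n(\cup_{m=0}^{n-\log n}\Sigma_m)\to 0$, and $L^T$ is a.s.\ bounded on $[0,\hat\alpha^n(\tau_{\mathrm{cov}}(\bar X^n))]$ — the latter being $O(b_n)$ by Lemma \ref{lem41}(b) — one gets that $b_n^{-1}R_n$ is bounded by $b_n^{-1}\mu_n(T_n\setminus\bar\Sigma_n)$ times a tight quantity, hence tends to $0$ in probability. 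Alternatively, and perhaps more cleanly, I would estimate $E_\rho^T[R_n]$ directly: each visit of $X^n$ to a vertex of $\Sigma_{n-\log n}$ is followed by an excursion into the thin part, and the expected length of such an excursion plus the expected number of such excursions before $\tau_{\mathrm{cov}}(\bar X^n)$ can both be bounded using the resistance/conductance structure (expected excursion time into a subtree of total conductance $\mu_n(T(i)\setminus\bar\Sigma_n) = O(\lambda^{-(n-\log n)})$ hung off a single vertex is of that order, which is $o(b_n)$ after accounting for the number of returns, which is itself at most polynomial in $b_n$-scaled quantities). A Markov inequality then closes this part.

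For $R_n'$, the residual cover time of the finite thin part starting from a vertex of $\bar\Sigma_n$: by the commute time identity (see \cite[(2.17)]{CHK}) and the fact that the relevant subnetwork $T_{n-\log n}$ has total conductance $b_{n-\log n} = O((2/\lambda)^{n-\log n}) = o(b_n)$ (by Lemma \ref{lem22}(a)) and resistance diameter $O(1)$ (Lemma \ref{props}, part (a), giving $d$ bounded on $T$), the maximal expected hitting time between vertices of $T_{n-\log n}$ is $o(b_n)$; a crude union bound over the at most $2^{n-\log n+1}$ vertices, using Matthews' method for the cover time, gives $E[R_n'] \leq C\,(n-\log n)\cdot b_{n-\log n} = o(b_n)$, so $b_n^{-1}R_n'\to 0$ in probability too. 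Combining, $b_n^{-1}|\tau_{\mathrm{cov}}(X^n) - \tau_{\mathrm{cov}}(\bar X^n)| \to 0$ in $P^T_\rho$-probability.

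The main obstacle I anticipate is making the excursion bookkeeping for $R_n$ rigorous: one must carefully bound the \emph{number} of excursions of $X^n$ into $T_n\setminus\bar\Sigma_n$ before $\tau_{\mathrm{cov}}(\bar X^n)$, since although each individual excursion is short (of order $\mu_n$-mass of the thin part, which is $o(b_n)$), a priori there could be many of them. The cleanest route is probably to bound $R_n$ pathwise via the occupation measure: $R_n = \int_{T_n\setminus\bar\Sigma_n} L^n_{\hat\alpha^n(\tau_{\mathrm{cov}}(\bar X^n))}(x)\,\mu_n(dx)$ where $L^n$ are the (continuous) local times of $X^n$, which are comparable to those of $X^T$ restricted to the thin part; then $R_n \leq \mu_n(T_n\setminus\bar\Sigma_n)\cdot \sup_{x\in T_n\setminus\bar\Sigma_n}L^n_{\hat\alpha^n(\tau_{\mathrm{cov}}(\bar X^n))}(x)$, and the supremum is controlled using joint continuity of $L^T$ together with the tightness of the time horizon from Lemma \ref{lem41}(b), sidestepping the excursion count entirely. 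This is the step I would spend the most care on.
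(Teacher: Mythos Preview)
Your decomposition $|\tau_{\mathrm{cov}}(X^n)-\tau_{\mathrm{cov}}(\bar X^n)|\leq R_n+R_n'$ is sound, but you miss the observation that makes the paper's proof much shorter: $R_n'=0$ identically. Since $X^n$ starts at the root and the graph is a tree, to reach any leaf in $\Sigma_n$ it must pass through all of that leaf's ancestors; hence once $X^n$ has visited every vertex of $\Sigma_n$ it has visited every vertex of $T_n$. As $\Sigma_n\subseteq\bar\Sigma_n\subseteq T_n$, the first time $X^n$ covers $\bar\Sigma_n$ is exactly $\tau_{\mathrm{cov}}(X^n)$, and so $\hat\alpha^n(\tau_{\mathrm{cov}}(\bar X^n))=\tau_{\mathrm{cov}}(X^n)$ with no residual cover time. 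The paper states this as the identity $\tau_{\mathrm{cov}}(\bar X^n)=\hat A^n_{\tau_{\mathrm{cov}}(X^n)}$ and then reduces the lemma to showing $b_n^{-1}\hat\alpha^n(b_n\cdot)\to\mathrm{id}$ uniformly on compacts, which it obtains from $b_n^{-1}A^n\to A^\Sigma$ and $b_n^{-1}\bar A^n\to A^\Sigma$ (both via Lemma~\ref{lem22} and continuity of $L^T$) together with the a.s.\ bound $\sup_n\alpha^n(b_nt)<\infty$. This is exactly your $R_n$ step, packaged differently.

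Two further remarks. First, your proposed argument for $R_n'$ (had it been needed) is not correct as written: you invoke the commute-time identity on the subnetwork $T_{n-\log n}$ with total conductance $b_{n-\log n}$, but $R_n'$ is a quantity for the walk $X^n$ on the full tree $T_n$, whose total conductance is $b_n$; Matthews' bound then gives only $O((n-\log n)\,b_n)$, which is not $o(b_n)$. Second, in your $R_n$ control there is a time-scale gap: Lemma~\ref{lem41}(b) gives tightness of $b_n^{-1}\tau_{\mathrm{cov}}(\bar X^n)$, but to bound $\sup_x L^n_{\hat\alpha^n(\tau_{\mathrm{cov}}(\bar X^n))}(x)=\sup_x L^T_{\bar\alpha^n(\tau_{\mathrm{cov}}(\bar X^n))}(x)$ you need the $X^T$-time $\bar\alpha^n(\tau_{\mathrm{cov}}(\bar X^n))$ to be tight, which requires the additional fact $\sup_n\bar\alpha^n(b_nK)<\infty$ a.s.\ for each $K$. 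This is exactly the content of the paper's claim \eqref{taunclaim} (stated there for $\alpha^n$, proved identically for $\bar\alpha^n$), and without it your pathwise bound is circular.
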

\begin{proof}
From Lemma \ref{lem22}, we have that $b_n^{-1}\mu_n$ and $b_n^{-1}\bar\mu_n$ both converge weakly to $\mu_{\Sigma}$ as $n\rightarrow\infty$. Hence, applying the continuity of the local times $L^T$, we find that, $P^T_{\rho}$-a.s., for each $t\geq 0$,
\begin{equation}\label{anconv}
b_n^{-1}A^n_t=b_n^{-1}\int_{T_n}L^T_t(x)\mu_n(dx)\rightarrow A^{\Sigma}_t,
\end{equation}
and also
\begin{equation}\label{baranconv}
b_n^{-1}\bar{A}^n_t=b_n^{-1}\int_{T_n}L^T_t(x)\bar{\mu}_n(dx)\rightarrow A^{\Sigma}_t.
\end{equation}
Moreover, since the limit is a continuous, non-decreasing function, we have that the two convergence statements hold uniformly on compact time intervals.

We next claim that, $P^T_{\rho}$-a.s., for each $t\geq 0$,
\begin{equation}\label{taunclaim}
\sup_{n\geq 1}\alpha^n(b_nt)<\infty.
\end{equation}
Indeed, similarly to \eqref{ltdiv}, it is possible to check that, $P^T_{\rho}$-a.s.,
\[\lim_{t\rightarrow\infty}\inf_{y\in T}L^T_t(y)=\infty.\]
Hence, for any $t>0$, one can find an $s<\infty$ such that $A^\Sigma_s\geq t+1$. By \eqref{anconv}, it follows that, for large $n$, $b_n^{-1}A^n_s> t$, and this implies $\alpha^n(b_n t)\leq s$. The result at \eqref{taunclaim} is readily obtained from this.

Now, defining local times $L^n$ for $X^n$ by setting
\[{L}^n_{t}(x)=\frac{1}{{\mu}^n(\{x\})}\int_{0}^t\mathbf{1}_{\{{X}^n_s=x\}}ds,\]
one can check, similarly to \eqref{ltequiv}, that
\[{L}^n_{t}(x)=L^T_{\alpha^n(t)}(x),\qquad \forall x\in T_n,\:t\geq 0.\]
Hence, for the additive functional $\hat{A}^n$ defined at \eqref{hatadef}, it holds that, $P^T_{\rho}$-a.s., for each $K\geq 0$,
\begin{eqnarray*}
\sup_{t\in[0,K]}\left|b_n^{-1}\hat{A}^n_{b_nt}-t\right|&=&\sup_{t\in[0,K]}\left|b_n^{-1}\int_{\bar{\Sigma}_n}{L}^n_{b_nt}(x)\bar{\mu}_n(dx)-t\right|\\
&=&\sup_{t\in[0,K]}\left|b_n^{-1}\int_{\bar{\Sigma}_n}L^T_{\alpha^n(b_nt)}(x)\bar{\mu}_n(dx)-t\right|\\
&=&\sup_{t\in[0,K]}b_n^{-1}\left|\bar{A}^n_{\alpha^n(b_nt)}-{A}^n_{\alpha^n(b_nt)}\right|.
\end{eqnarray*}
Combining \eqref{anconv}, \eqref{baranconv} and \eqref{taunclaim}, it follows that, $P^T_{\rho}$-a.s., for each $K\geq 0$,
\[\sup_{t\in[0,K]}\left|b_n^{-1}\hat{A}^n_{b_nt}-t\right|\rightarrow 0\]
as $n\rightarrow\infty$. From this, it is an elementary exercise to deduce that, for the right-continuous inverse of $\hat{A}^n$, that is $\hat{\alpha}^n$, we have, $P^T_{\rho}$-a.s., for each $K\geq 0$,
\begin{equation}\label{pp}
\sup_{t\in[0,K]}\left|b_n^{-1}\hat{\alpha}^n(b_nt)-t\right|\rightarrow 0
\end{equation}
as $n\rightarrow\infty$.

Finally, we observe that since the cover time of $X^n$ must happen at a vertex in $\Sigma_n\subseteq\bar{\Sigma}_n$, it $P^T_{\rho}$-a.s.\ holds that
\begin{equation}\label{tat}
\tau_{\mathrm{cov}}(\bar{X}^n)=\hat{A}^n_{\tau_{\mathrm{cov}}({X}^n)}.
\end{equation}
Furthermore, since $\hat{A}^n$ is strictly increasing at $\tau_{\mathrm{cov}}({X}^n)$, we have that
\[\hat{\alpha}^n\left(\tau_{\mathrm{cov}}(\bar{X}^n)\right)={\tau_{\mathrm{cov}}({X}^n)}.\]
(Cf.\ \cite[Lemma 2.11]{ACMM}.) Consequently,
\[b_n^{-1}\left|\tau_{\mathrm{cov}}({X}^n)-\tau_{\mathrm{cov}}(\bar{X}^n)\right|=b_n^{-1}\left|\hat{\alpha}^n\left(\tau_{\mathrm{cov}}(\bar{X}^n)\right)-{\tau_{\mathrm{cov}}(\bar{X}^n)}\right|,\]
which in turn implies
\begin{align*}
\lefteqn{P^T_{\rho}\left(b_n^{-1}\left|\tau_{\mathrm{cov}}({X}^n)-\tau_{\mathrm{cov}}(\bar{X}^n)\right|>\varepsilon\right)}\\
&\leq P^T_{\rho}\left(b_n^{-1}\tau_{\mathrm{cov}}(\bar{X}^n)>K\right)+P^T_{\rho}\left(\sup_{t\in[0,K]}\left|b_n^{-1}\hat{\alpha}^n(b_nt)-t\right|>\varepsilon\right).
\end{align*}
From \eqref{pp}, the second term here converges to zero as $n\rightarrow\infty$ for any finite $K$. Moreover, from Lemma \ref{lem41}(b), we have that the $\limsup$ of the first term can be made arbitrarily small by taking $K$ large. This completes the proof.
\end{proof}

The following result establishes that $b_n^{-1}\tau_{\mathrm{cov}}(\bar{X}^n)$ is at most only a small amount greater than $\tau_{\mathrm{cov}}(\tilde{X}^n)$ asymptotically.

\begin{lem}\label{lem43} For any $\varepsilon>0$,
\[P^T_{\rho}\left(b_n^{-1}\tau_{\mathrm{cov}}(\bar{X}^n)-\tau_{\mathrm{cov}}(\tilde{X}^n)>\varepsilon\right)\rightarrow 0.\]
\end{lem}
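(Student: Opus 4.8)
The plan is to pull the cover times of $\bar{X}^n$ and $\tilde{X}^n$ back onto the time-scale of $X^T$ — working throughout in the coupling of the introduction, with $X^T$ started at $\rho$ — and to compare them there, the point being that in the tree $T$ one cannot cover the leaf set $\tilde{\Sigma}_n$ without first covering $\bar{\Sigma}_n$.

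First I would record the relevant time-change identities. Let $\sigma_n$ be the first time at which $X^T$ has visited every vertex of $\bar{\Sigma}_n$, and $\tilde{\sigma}_n$ the first time at which $X^T$ has visited every point of $\tilde{\Sigma}_n$. Since the cover of $\bar{X}^n$ occurs at a vertex of $\bar{\Sigma}_n$, at which instant the additive functional $\bar{A}^n$ is strictly increasing, the argument used to obtain \eqref{tat} (cf.\ \cite[Lemma 2.11]{ACMM}) gives $\tau_{\mathrm{cov}}(\bar{X}^n)=\bar{A}^n_{\sigma_n}$, and likewise $\tau_{\mathrm{cov}}(\tilde{X}^n)=\tilde{A}^n_{\tilde{\sigma}_n}$.

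The geometric input is that $\sigma_n\le\tilde{\sigma}_n$, $P^T_{\rho}$-a.s. Indeed, for each $i\in\Sigma_n$ the geodesic in $T$ from $\rho$ to the leaf $i(0,0,\dots)\in\tilde{\Sigma}_n$ passes through $i$ together with all of its ancestors; since the union of $\Sigma_n$ with its ancestors at levels $n-\log n,\dots,n-1$ is precisely $\bar{\Sigma}_n$, the continuity of the sample paths of $X^T$ forces $X^T$ to have covered $\bar{\Sigma}_n$ by the time it has covered $\tilde{\Sigma}_n$. I would also note that $\tilde{\sigma}_n\le\tau^T_{\mathrm{cov}}:=\inf\{t\ge0:X^T_{[0,t]}=T\}$, and that $\tau^T_{\mathrm{cov}}<\infty$, $P^T_{\rho}$-a.s.; this follows since, as already observed in the proof of Lemma \ref{lem42}, $\inf_{y\in T}L^T_t(y)\to\infty$, which forces every point of $T$ to be visited in finite time. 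Separately, exactly as for \eqref{baranconv} — now using the weak convergences $b_n^{-1}\bar{\mu}_n\to\mu_\Sigma$ and $\tilde{\mu}_n\to\mu_\Sigma$ from Lemma \ref{lem22} together with the joint continuity of $L^T$ — one obtains, $P^T_{\rho}$-a.s.\ and uniformly on compact time intervals, that $b_n^{-1}\bar{A}^n_s\to A^\Sigma_s$ and $\tilde{A}^n_s\to A^\Sigma_s$; consequently, writing $g_n(s):=b_n^{-1}\bar{A}^n_s-\tilde{A}^n_s$, we have $\sup_{s\in[0,K]}|g_n(s)|\to0$, $P^T_{\rho}$-a.s., for every $K<\infty$.

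Finally, I would assemble the pieces. Monotonicity of $\bar{A}^n$ and the inequality $\sigma_n\le\tilde{\sigma}_n$ give
\[b_n^{-1}\tau_{\mathrm{cov}}(\bar{X}^n)-\tau_{\mathrm{cov}}(\tilde{X}^n)=b_n^{-1}\bar{A}^n_{\sigma_n}-\tilde{A}^n_{\tilde{\sigma}_n}\le b_n^{-1}\bar{A}^n_{\tilde{\sigma}_n}-\tilde{A}^n_{\tilde{\sigma}_n}=g_n(\tilde{\sigma}_n),\]
whence, for any $K<\infty$,
\[P^T_{\rho}\left(b_n^{-1}\tau_{\mathrm{cov}}(\bar{X}^n)-\tau_{\mathrm{cov}}(\tilde{X}^n)>\varepsilon\right)\le P^T_{\rho}\left(\tau^T_{\mathrm{cov}}>K\right)+P^T_{\rho}\left(\sup_{s\in[0,K]}|g_n(s)|>\varepsilon\right).\]
Sending $n\to\infty$ annihilates the second term, and then sending $K\to\infty$ annihilates the first, which proves the lemma. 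The only delicate point is the pair of identities $\tau_{\mathrm{cov}}(\bar{X}^n)=\bar{A}^n_{\sigma_n}$ and $\tau_{\mathrm{cov}}(\tilde{X}^n)=\tilde{A}^n_{\tilde{\sigma}_n}$: one has to verify, as in the derivation of \eqref{tat}, that the relevant additive functional is strictly increasing at the cover time (so that the cover time and the $X^T$-hitting time of the last vertex correspond under the time change); everything else is a routine consequence of the convergences already in hand.
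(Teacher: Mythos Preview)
Your proof is correct and follows essentially the same approach as the paper: both arguments pull the cover times back to the $X^T$ time-scale via the identities $\tau_{\mathrm{cov}}(\bar{X}^n)=\bar{A}^n_{\tau_{\mathrm{cov}}^{\bar{\Sigma}_n}(X^T)}$ and $\tau_{\mathrm{cov}}(\tilde{X}^n)=\tilde{A}^n_{\tau_{\mathrm{cov}}^{\tilde{\Sigma}_n}(X^T)}$, use the continuity of $X^T$ to get $\tau_{\mathrm{cov}}^{\bar{\Sigma}_n}(X^T)\le\tau_{\mathrm{cov}}^{\tilde{\Sigma}_n}(X^T)\le\tau_{\mathrm{cov}}(X^T)<\infty$, and then conclude from the uniform-on-compacts convergence of $b_n^{-1}\bar{A}^n$ and $\tilde{A}^n$ to $A^\Sigma$. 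The only cosmetic difference is that the paper deduces the almost-sure statement $\sup_{t\le\tau_{\mathrm{cov}}^{\tilde{\Sigma}_n}(X^T)}|b_n^{-1}\bar{A}^n_t-\tilde{A}^n_t|\to0$ directly, whereas you split into the two probability terms $P^T_\rho(\tau^T_{\mathrm{cov}}>K)$ and $P^T_\rho(\sup_{s\le K}|g_n(s)|>\varepsilon)$; these are equivalent formulations.
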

\begin{proof} For a given point $x\in T$, it holds that $P_x^T(\inf\{t>0:\:L^T_t(x)>0\}=0)=1$. (This follows from the definition of local times, see \cite[Definition 4.10]{Noda}, for example.) From this, it readily follows that if $\tau_x(X^T)=\inf\{t\geq 0:\:X^T=x\}$ is the hitting time of $x$ by $X^T$ and $\tau_x(\bar{X}^n)$ is the similarly-defined hitting time of $x$ by $\bar{X}^n$, then we have that, for $x\in\bar\Sigma_n$,
\begin{equation}\label{a1}
\tau_x(\bar{X}^n)=\bar A^n_{\tau_x(X^T)},
\end{equation}
$P_\rho^T$-a.s. Since $\bar\Sigma_n$ is a finite set, this yields
\[\tau_{\mathrm{cov}}(\bar{X}^n)=\bar A^n_{\tau_{\mathrm{cov}}^{\bar{\Sigma}_n}(X^T)},\]
$P_\rho^T$-a.s., where
\[\tau_{\mathrm{cov}}^{\bar{\Sigma}_n}(X^T):=\inf\left\{t\geq 0:\:X^T_{[0,t]}\supseteq \bar{\Sigma}_n\right\}.\]
(This is the analogue of \eqref{tat} for the processes under consideration here.) For the same reasons, we also have that
\begin{equation}\label{twocovs}
\tau_{\mathrm{cov}}(\tilde{X}^n)=\tilde A^n_{\tau_{\mathrm{cov}}^{\tilde{\Sigma}_n}(X^T)},
\end{equation}
$P_\rho^T$-a.s., where
\begin{equation}\label{t2}
\tau_{\mathrm{cov}}^{\tilde{\Sigma}_n}(X^T):=\inf\left\{t\geq 0:\:X^T_{[0,t]}\supseteq \tilde{\Sigma}_n\right\}.
\end{equation}
Since $X^T$ has continuous sample paths, under $P_\rho^T$, we have that $\tau_{\mathrm{cov}}^{\bar{\Sigma}_n}(X^T)\leq \tau_{\mathrm{cov}}^{\tilde{\Sigma}_n}(X^T)$, and hence it follows that
\[b_n^{-1}\tau_{\mathrm{cov}}(\bar{X}^n)-\tau_{\mathrm{cov}}(\tilde{X}^n)\leq b_n^{-1}\bar A^n_{\tau_{\mathrm{cov}}^{\tilde{\Sigma}_n}(X^T)}-\tilde A^n_{\tau_{\mathrm{cov}}^{\tilde{\Sigma}_n}(X^T)}\leq \sup_{t\leq \tau_{\mathrm{cov}}^{\tilde{\Sigma}_n}(X^T)}\left|b_n^{-1}\bar A^n_t-\tilde{A}^n_t\right|.\]
(We highlight that the two arguments in the central expression here are both equal to $\tau_{\mathrm{cov}}^{\tilde{\Sigma}_n}(X^T)$.) Now, since the local times $L^T$ are jointly continuous, it follows from Lemma \ref{lem22} that
\[b_n^{-1}\bar A^n_t\rightarrow A^\Sigma_t, \qquad \tilde A^n_t\rightarrow A^\Sigma_t,\]
uniformly on compact time intervals, $P_\rho^T$-a.s. (Cf.\ \eqref{anconv}, \eqref{baranconv}.) Moreover, it is possible to check that $\tau_{\mathrm{cov}}^{\tilde{\Sigma}_n}(X^T)\leq \tau_{\mathrm{cov}}(X^T)<\infty$, $P_\rho^T$-a.s., where the finiteness claim can be checked by arguing as for Lemma \ref{tcovfin}. Hence we have established that, $P_\rho^T$-a.s.,
\[\sup_{t\leq \tau_{\mathrm{cov}}^{\tilde{\Sigma}_n}(X^T)}\left|b_n^{-1}\bar A^n_t-\tilde{A}^n_t\right|\rightarrow 0,\]
which confirms the result.
\end{proof}

From the convergence of processes $\bar{X}^n_{b_n\cdot}\rightarrow X^\Sigma$ of Lemma \ref{lem41}, it is straightforward to show the subsequent result, namely that $b_n^{-1}\tau_{\mathrm{cov}}(\bar{X}^n)$ is stochastically greater than $\bar\tau_{\mathrm{cov}}({X}^\Sigma)$ asymptotically.

\begin{lem}\label{lem44} For any $t\geq 0$, it holds that
\[\limsup_{n\rightarrow\infty}P_\rho^T\left(b_n^{-1}\tau_{\mathrm{cov}}(\bar{X}^n)\leq t\right)\leq P_\rho^T\left(\bar\tau_{\mathrm{cov}}({X}^\Sigma)\leq t\right).\]
\end{lem}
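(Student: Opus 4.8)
The plan is to deduce the statement from the almost-sure convergence provided by Lemma \ref{lem41}(a), exploiting a Skorohod coupling exactly as in the proof of Lemma \ref{lem41}(b). First I would invoke the separability of the topology on $D(\mathbb{R}_+,T)\times \hat{C}(T\times \mathbb{R}_+,\mathbb{R})$ (see \cite[Theorem 2.28]{Noda}) and the Skorohod representation theorem to place all the relevant objects on a common probability space, so that
\[\left(\left(\bar{X}^n_{b_nt}\right)_{t\geq 0},\left(\bar{L}^n_{b_nt}(x)\right)_{x\in \bar{\Sigma}_n,\:t\geq 0}\right)\rightarrow (X^\Sigma,L^\Sigma)\]
almost-surely in this space, with $\bar\Sigma_n\to\Sigma$ in the Hausdorff sense. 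The key observation is then the elementary identity, valid $P_\rho^T$-a.s.,
\[b_n^{-1}\tau_{\mathrm{cov}}(\bar{X}^n)\leq t\quad\Longleftrightarrow\quad \inf_{x\in\bar{\Sigma}_n}\bar{L}^n_{b_nt}(x)>0,\]
(using that on the coupled space the cover time is realised at a vertex, and that $\bar L^n$ is the genuine occupation density), together with its limiting counterpart: since $L^\Sigma$ is jointly continuous and, by \eqref{ltdiv} and the c\`adl\`ag property, $\overline{X^\Sigma_{[0,t]}}=\Sigma$ precisely when $\inf_{x\in\Sigma}L^\Sigma_t(x)>0$, we have $\bar\tau_{\mathrm{cov}}(X^\Sigma)\leq t$ iff $\inf_{x\in\Sigma}L^\Sigma_t(x)>0$.

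The main step is then to transfer a strict positivity of $\inf_{x\in\bar\Sigma_n}\bar L^n_{b_nt}(x)$ along a subsequence to strict positivity of $\inf_{x\in\Sigma}L^\Sigma_t(x)$. Concretely, on the coupled space one argues that, almost-surely,
\[\liminf_{n\to\infty}\inf_{x\in\bar{\Sigma}_n}\bar{L}^n_{b_nt}(x)\leq \inf_{x\in\Sigma}L^\Sigma_t(x);\]
indeed, for any $x\in\Sigma$ there exist $x_n\in\bar\Sigma_n$ with $x_n\to x$ in $(T,d)$, and by the definition of the topology on $\hat{C}(T\times\mathbb{R}_+,\mathbb{R})$ around \eqref{hatCdef}, $\bar L^n_{b_n\cdot}(x_n)\to L^\Sigma_\cdot(x)$ locally uniformly, so $\liminf_n \inf_{y\in\bar\Sigma_n}\bar L^n_{b_nt}(y)\leq \liminf_n \bar L^n_{b_nt}(x_n)=L^\Sigma_t(x)$; taking the infimum over $x\in\Sigma$ gives the claim. (One must take a little care that $t$ is a continuity point of the limiting occupation process, but $L^\Sigma$ is continuous in $t$, so this is automatic; alternatively one passes to $t'>t$ and uses monotonicity, letting $t'\downarrow t$ at the end.) Consequently, on the event $\{b_n^{-1}\tau_{\mathrm{cov}}(\bar X^n)\leq t\ \text{for infinitely many }n\}$ we have $\inf_{x\in\Sigma}L^\Sigma_t(x)>0$... wait, this needs the reverse direction of the inequality, so instead I would argue: if $\inf_{x\in\Sigma}L^\Sigma_t(x)=0$ then, along \emph{every} subsequence, $\inf_{x\in\bar\Sigma_n}\bar L^n_{b_nt}(x)\to 0$ is \emph{not} guaranteed, so more cleanly I use Fatou-type reasoning on indicators: $\mathbf{1}_{\{b_n^{-1}\tau_{\mathrm{cov}}(\bar X^n)\leq t\}}=\mathbf{1}_{\{\inf_{x\in\bar\Sigma_n}\bar L^n_{b_nt}(x)>0\}}$, and by the displayed $\liminf$ inequality, on $\{\inf_{x\in\Sigma}L^\Sigma_t(x)=0\}$ we get $\limsup_n \mathbf{1}_{\{\inf_{x\in\bar\Sigma_n}\bar L^n_{b_nt}(x)>0\}}\leq$ \dots here the indicator can still be $1$ even when the infimum is small. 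To avoid this, I pass to $t'<t$: if $b_n^{-1}\tau_{\mathrm{cov}}(\bar X^n)\leq t$ then $\inf_{x\in\bar\Sigma_n}\bar L^n_{b_nt'}(x)$ need not be positive either. The correct route is the first one after all: use that $b_n^{-1}\tau_{\mathrm{cov}}(\bar X^n)\le t$ implies, for any $s>t$, $\inf_{x\in\bar\Sigma_n}\bar L^n_{b_ns}(x)>0$, hence by the $\liminf$ inequality (applied at time $s$) and a diagonal argument over a countable dense set of such $s$, on the event of i.o.\ coverage we obtain $\inf_{x\in\Sigma}L^\Sigma_s(x)>0$ for all $s>t$ in that dense set, whence $\bar\tau_{\mathrm{cov}}(X^\Sigma)\le t$. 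Then $\limsup_n P_\rho^T(b_n^{-1}\tau_{\mathrm{cov}}(\bar X^n)\le t)\le P_\rho^T(b_n^{-1}\tau_{\mathrm{cov}}(\bar X^n)\le t\ \text{i.o.})\le P_\rho^T(\bar\tau_{\mathrm{cov}}(X^\Sigma)\le t)$ by reverse Fatou for the $\limsup$ of probabilities.

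The step I expect to be the genuine obstacle is the passage at a \emph{fixed} time $t$: $\bar\tau_{\mathrm{cov}}(X^\Sigma)\le t$ is not an open condition in the limiting process (it is a closure, not an ``is equal to'' condition), and the map $t\mapsto \overline{X^\Sigma_{[0,t]}}$ can jump. The device above — replacing $t$ by a countable family $s>t$, using the joint continuity of $L^\Sigma$ to characterise $\{\bar\tau_{\mathrm{cov}}(X^\Sigma)\le t\}=\bigcap_{s>t}\{\inf_{x\in\Sigma}L^\Sigma_s(x)>0\}$ (which holds because $L^\Sigma$ is continuous in $t$ and the infimum over the compact $\Sigma$ of a jointly continuous function is continuous in $t$), and then applying the $\liminf$ inequality at each $s$ — is what makes the argument go through. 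Everything else is routine: the occupation-density identity for $\bar L^n$ and its limiting analogue for $L^\Sigma$ have already been set up in Section \ref{sec3}, and the Hausdorff convergence $\bar\Sigma_n\to\Sigma$ together with the topology of $\hat C(T\times\mathbb R_+,\mathbb R)$ gives exactly the uniform control on local times needed for the $\liminf$ inequality, precisely as in the proof of Lemma \ref{lem41}(b).
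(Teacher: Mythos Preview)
Your approach has a genuine gap at the crucial transfer step. You correctly prove
\[
\liminf_{n\to\infty}\inf_{x\in\bar{\Sigma}_n}\bar{L}^n_{b_ns}(x)\leq \inf_{x\in\Sigma}L^\Sigma_s(x),
\]
but this inequality does not let you pass from ``$\inf_{x\in\bar\Sigma_n}\bar L^n_{b_ns}(x)>0$ infinitely often'' to ``$\inf_{x\in\Sigma}L^\Sigma_s(x)>0$''. Positivity is simply not preserved under limits: along a subsequence the discrete infima may well be strictly positive yet tending to zero, and then the displayed inequality gives no lower bound on the right-hand side. (Even the stronger statement $\inf_{x\in\bar\Sigma_n}\bar L^n_{b_ns}(x)\to\inf_{x\in\Sigma}L^\Sigma_s(x)$, which is available from the proof of Lemma~\ref{lem41}(b), does not help: $a_n>0$ and $a_n\to b$ only yields $b\geq 0$.) Your ``diagonal argument over $s>t$'' inherits the same defect. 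As a secondary issue, the equivalence $\{\bar\tau_{\mathrm{cov}}(X^\Sigma)\leq t\}=\{\inf_{x\in\Sigma}L^\Sigma_t(x)>0\}$ that you invoke is not established; the paper explicitly leaves open whether $\bar\tau_{\mathrm{cov}}(X^\Sigma)$ coincides with the usual cover time, and dense range certainly does not force every local time to be positive.

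The paper's argument avoids local times altogether and works directly with the path convergence $\bar X^n_{b_n\cdot}\to X^\Sigma$ in $D(\mathbb{R}_+,T)$ (also supplied by Lemma~\ref{lem41}(a)). If $t<\bar\tau_{\mathrm{cov}}(X^\Sigma)$, there is an open ball $B(x,\varepsilon)$ with $x\in\Sigma$ disjoint from $X^\Sigma_{[0,t]}$; Skorohod convergence then forces $\bar X^n_{[0,b_nt']}$ to avoid $B(x,\varepsilon/2)$ for any $t'<t$ and all large $n$, while $B(x,\varepsilon/2)\cap\bar\Sigma_n\neq\emptyset$ by Hausdorff convergence. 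Hence $b_n^{-1}\tau_{\mathrm{cov}}(\bar X^n)\geq t'$, and letting $t'\uparrow t$ gives $\bar\tau_{\mathrm{cov}}(X^\Sigma)\leq\liminf_n b_n^{-1}\tau_{\mathrm{cov}}(\bar X^n)$ on the coupled space. The point is that ``avoiding an open set on a compact time interval'' \emph{is} stable under $J_1$ limits, whereas ``all local times strictly positive'' is not.
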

\begin{proof} The proof follows closely the argument of \cite[Corollary 7.3]{CLLT}. In particular, by combining Lemma \ref{lem41} and the Skorohod representation theorem (i.e.\ \cite[Theorem 4.30]{Kall}), we may assume that $\bar{X}^n$ and $X^\Sigma$ are coupled in such a  way that $(\bar{X}^n_{b_nt})_{t\geq 0}$ converges almost-surely to $X^\Sigma$ in $D(\mathbb{R}_+,T)$. Now, if $t<\bar\tau_{\mathrm{cov}}({X}^\Sigma)$, then there exists an open ball $B(x,\varepsilon)$ in $(T,d)$, with $x\in\Sigma$, $\varepsilon>0$,  such that $B(x,\varepsilon)\cap {X}^\Sigma_{[0,t]}=\emptyset$. It follows that, for any $t'<t$, $B(x,\varepsilon/2)\cap \bar{X}^n_{[0,b_nt']}=\emptyset$ for all large $n$. However, since $\bar{\Sigma}_n\rightarrow \Sigma$ as compact sets (with respect to the Hausdorff distance, see the proof of Lemma \ref{lem41}), it must hold that $B(x,\varepsilon/2)\cap\bar{\Sigma}_n\neq \emptyset$ for large $n$. Thus $\tau_{\mathrm{cov}}( \bar{X}^n)\geq b_nt'$. As $t'$ can be chosen to be arbitrarily close to $\bar\tau_{\mathrm{cov}}({X}^\Sigma)$, it follows that $\bar\tau_{\mathrm{cov}}({X}^\Sigma)\leq \liminf_{n\rightarrow\infty}b_n^{-1}\tau_{\mathrm{cov}}( \bar{X}^n)$, which yields the result.
\end{proof}

We now give an alternative characterisation of $\bar\tau_{\mathrm{cov}}({X}^\Sigma)$ in terms of a sequence of approximations. Similarly to \eqref{t2}, we define
\[\tau_{\mathrm{cov}}^{\tilde{\Sigma}_n}(X^\Sigma):=\inf\left\{t\geq 0:\:X^\Sigma_{[0,t]}\supseteq \tilde{\Sigma}_n\right\}.\]

\begin{lem}\label{lem45} It $P_\rho^T$-a.s.\ holds that
\[\bar\tau_{\mathrm{cov}}({X}^\Sigma)=\lim_{n\rightarrow\infty}\tau_{\mathrm{cov}}^{\tilde{\Sigma}_n}(X^\Sigma).\]
\end{lem}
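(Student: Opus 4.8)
The plan is to exploit the fact that $\tilde{\Sigma}_n \uparrow$ is an exhausting sequence for $\Sigma$ in a suitable sense, namely that $\tilde{\Sigma}_n \subseteq \tilde{\Sigma}_{n+1}$ (this is immediate from the definition, since $i(0,0,\dots)$ for $i \in \Sigma_n$ is of the form $j(0,0,\dots)$ for $j = i0 \in \Sigma_{n+1}$) and that $\bigcup_n \tilde{\Sigma}_n$ is dense in $\Sigma$ (since $d_H(\tilde{\Sigma}_n,\Sigma) \to 0$, as recorded in the proof of Lemma \ref{lem41}). The monotonicity of $\tilde\Sigma_n$ gives that $\tau_{\mathrm{cov}}^{\tilde{\Sigma}_n}(X^\Sigma)$ is nondecreasing in $n$, so the limit on the right-hand side exists in $[0,\infty]$, $P_\rho^T$-a.s.; call it $\sigma$. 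It then suffices to prove the two inequalities $\sigma \le \bar\tau_{\mathrm{cov}}(X^\Sigma)$ and $\sigma \ge \bar\tau_{\mathrm{cov}}(X^\Sigma)$, $P_\rho^T$-a.s.

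For the inequality $\sigma \le \bar\tau_{\mathrm{cov}}(X^\Sigma)$: since $\tilde{\Sigma}_n \subseteq \Sigma$, we have $\tilde{\Sigma}_n \subseteq \overline{X^\Sigma_{[0,t]}}$ whenever $t \ge \bar\tau_{\mathrm{cov}}(X^\Sigma)$; but $\tilde\Sigma_n$ is finite, so $\tilde\Sigma_n \subseteq \overline{X^\Sigma_{[0,t]}}$ in fact forces $\tilde\Sigma_n \subseteq X^\Sigma_{[0,t]}$ — here I would use that the hitting times of the (finitely many) points of $\tilde\Sigma_n$ are all finite, $P^\Sigma_x$-a.s.\ for any $x$ (via the commute time identity, exactly as invoked for \eqref{ltdiv} and in the proof of Lemma \ref{tcovfin}), so that each point of $\tilde\Sigma_n$ is actually visited, hence lies in $X^\Sigma_{[0,t]}$ once $t$ exceeds the (finite) maximum of these hitting times, which is at most $\bar\tau_{\mathrm{cov}}(X^\Sigma)$ since after covering $\Sigma$ up to closure each isolated-from-the-orbit point has in particular been approached, and being in a finite set it has been hit. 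Thus $\tau_{\mathrm{cov}}^{\tilde\Sigma_n}(X^\Sigma) \le \bar\tau_{\mathrm{cov}}(X^\Sigma)$ for every $n$, and letting $n\to\infty$ gives $\sigma \le \bar\tau_{\mathrm{cov}}(X^\Sigma)$.

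For the reverse inequality $\sigma \ge \bar\tau_{\mathrm{cov}}(X^\Sigma)$: I would argue by contradiction, supposing on a positive-probability event that $\sigma < \bar\tau_{\mathrm{cov}}(X^\Sigma)$. Pick a rational $t$ with $\sigma < t < \bar\tau_{\mathrm{cov}}(X^\Sigma)$. Then $\overline{X^\Sigma_{[0,t]}} \ne \Sigma$, so there is a point $x \in \Sigma$ and $\varepsilon > 0$ with $B(x,\varepsilon) \cap X^\Sigma_{[0,t]} = \emptyset$. On the other hand, $t > \sigma \ge \tau_{\mathrm{cov}}^{\tilde\Sigma_n}(X^\Sigma)$ for all $n$, so $\tilde\Sigma_n \subseteq X^\Sigma_{[0,t]}$ for every $n$, hence $\tilde\Sigma_n \cap B(x,\varepsilon) = \emptyset$ for every $n$; but this contradicts $d_H(\tilde\Sigma_n,\Sigma)\to 0$, which forces $\tilde\Sigma_n \cap B(x,\varepsilon) \ne \emptyset$ for all large $n$. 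This contradiction establishes $\sigma \ge \bar\tau_{\mathrm{cov}}(X^\Sigma)$, $P_\rho^T$-a.s., completing the proof.

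The main obstacle is the first inequality, specifically the subtlety that $\bar\tau_{\mathrm{cov}}(X^\Sigma)$ is defined via the \emph{closure} of the orbit rather than the orbit itself (the point flagged in the Remark after Theorem \ref{main}): one must be careful that, although an arbitrary point of $\Sigma$ need not be visited by time $\bar\tau_{\mathrm{cov}}(X^\Sigma)$, a point belonging to the \emph{finite} set $\tilde\Sigma_n$ must be, and this requires the almost-sure finiteness of point hitting times for $X^\Sigma$. The cleanest route is probably to note that $\tau_{\mathrm{cov}}^{\tilde\Sigma_n}(X^\Sigma)$ equals the maximum over $y \in \tilde\Sigma_n$ of $\tau_y(X^\Sigma)$, each of which is $P_x^\Sigma$-a.s.\ finite and is $P_x^\Sigma$-a.s.\ at most $\bar\tau_{\mathrm{cov}}(X^\Sigma)$ because $y \in \overline{X^\Sigma_{[0,\bar\tau_{\mathrm{cov}}(X^\Sigma)]}}$ combined with right-continuity of paths and $y$ being isolated in no bad way — more simply, $y \in \overline{X^\Sigma_{[0,s]}}$ for all $s > \bar\tau_{\mathrm{cov}}(X^\Sigma)$, and since hitting times are a.s.\ finite, $\tau_y(X^\Sigma) < \infty$, and one checks $\tau_y(X^\Sigma) \le \bar\tau_{\mathrm{cov}}(X^\Sigma)$ using that the closure of the orbit up to any $s$ strictly before $\tau_y$ omits a neighbourhood of $y$. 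Everything else is routine manipulation of the definitions together with the already-established Hausdorff convergence $\tilde\Sigma_n \to \Sigma$.
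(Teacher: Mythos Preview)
Your argument for $\sigma \geq \bar\tau_{\mathrm{cov}}(X^\Sigma)$ is correct and matches the paper's (both use the density of $\cup_n\tilde\Sigma_n$ in $\Sigma$). The difference is in the other inequality. For $\tau \leq \bar\tau_{\mathrm{cov}}(X^\Sigma)$, the paper does \emph{not} argue directly on $X^\Sigma$; instead it passes through the underlying continuous process $X^T$ via the time-change relation $\tau_x(X^\Sigma)=A^\Sigma_{\tau_x(X^T)}$, bounds $\tau_{\mathrm{cov}}^{\tilde\Sigma_n}(X^T)\leq\tau_{\mathrm{cov}}(X^T)$, and then uses the \emph{continuity} of $X^T$ to produce, for any $u<\tau$, an open ball in $\Sigma$ missed by $X^\Sigma_{[0,u]}$.

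Your direct approach hinges on the claim that for each fixed $y\in\tilde\Sigma_n$, if $s<\tau_y(X^\Sigma)$ then $\overline{X^\Sigma_{[0,s]}}$ omits a neighbourhood of $y$ --- equivalently, that $y\in\overline{X^\Sigma_{[0,s]}}$ forces $\tau_y(X^\Sigma)\leq s$. You justify this only with ``right-continuity of paths'' and the finiteness of $\tilde\Sigma_n$, and neither suffices. If $t_k\uparrow t^*\leq s$ with $X^\Sigma_{t_k}\to y$, right-continuity gives only $X^\Sigma_{t^*-}=y$, which for a jump process need not lie in the range; and the finiteness of $\tilde\Sigma_n$ plays no role in passing from ``in the closure of the orbit'' to ``in the orbit''. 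The claim \emph{is} true, but the missing ingredient is the quasi-left-continuity of the Hunt process $X^\Sigma$: setting $\sigma_k:=\inf\{t:d(X^\Sigma_t,y)<1/k\}$, one has $\sigma_k\leq s$ for all $k$, $\sigma_k\uparrow\sigma\leq s$, and quasi-left-continuity gives $X^\Sigma_\sigma=\lim_k X^\Sigma_{\sigma_k}=y$, whence $\tau_y(X^\Sigma)\leq s$. With this patch your route works and is arguably more intrinsic (it avoids the auxiliary process $X^T$), but as written there is a genuine gap precisely at the point you yourself flag as the main obstacle. The paper's route sidesteps the issue entirely by borrowing continuity from $X^T$.
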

\begin{proof}
First, note that $\tau_{\mathrm{cov}}^{\tilde{\Sigma}_n}(X^\Sigma)$ is increasing, and so it is $P_\rho^T$-a.s.\ possible to define
\[\tau:=\lim_{n\rightarrow\infty}\tau_{\mathrm{cov}}^{\tilde{\Sigma}_n}(X^\Sigma).\]
If $\tau=\infty$, then we clearly have $\tau\geq \bar\tau_{\mathrm{cov}}({X}^\Sigma)$. On the other hand, if we suppose $\tau<\infty$ and $t>\tau$, then it must hold that
\[X^{\Sigma}_{[0,t]}\supseteq\bigcup_{n\geq 1}\tilde\Sigma_n.\]
Since $\cup_{n\geq 1}\tilde\Sigma_n$ is dense in $\Sigma$, it follows that
\[\overline{X^\Sigma_{[0,t]}}=\Sigma,\]
and therefore $t\geq \bar\tau_{\mathrm{cov}}({X}^\Sigma)$. In particular, this establishes that $\tau\geq \bar\tau_{\mathrm{cov}}({X}^\Sigma)$.

Next, suppose that $t<\tau$. In this case, $t<\tau_{\mathrm{cov}}^{\tilde{\Sigma}_n}(X^\Sigma)$ for large $n$. Now, similarly to \eqref{a1}, one can check that, $P_\rho^T$-a.s.,
\[\tau_x(X^\Sigma)=A^\Sigma_{\tau_x(X^T)}\]
for any $x\in\Sigma$, where we denote by $\tau_x$ the hitting times of $x$ by the relevant processes, and hence we can assume that this is the case for all $x\in \cup_{n\geq 1}\tilde{\Sigma}_n$ for the realisation of $X^T$ that we are considering. From this, it readily follows that, for large $n$,
\[t<\tau_{\mathrm{cov}}^{\tilde{\Sigma}_n}(X^\Sigma)=A^\Sigma_{\tau_{\mathrm{cov}}^{\tilde{\Sigma}_n}(X^T)}\leq A^\Sigma_{\tau_{\mathrm{cov}}(X^T)}.\]
Applying $\alpha^\Sigma$ to both sides, this implies
\[\alpha^\Sigma(t)\leq \tau_{\mathrm{cov}}(X^T).\]
(Here, we have used that $A^\Sigma$ is strictly increasing at $\tau_{\mathrm{cov}}(X^T)$, and therefore $\tau_{\mathrm{cov}}(X^T)=\alpha^\Sigma(A^\Sigma_{\tau_{\mathrm{cov}}(X^T)})$; the claim holds because, under $P_\rho^T$, at the stopping time $\tau_{\mathrm{cov}}(X^T)$, we have that $X^T_{\tau_{\mathrm{cov}}(X^T)}\in\Sigma$.) Consequently, for any $s<\alpha^\Sigma(t)$, it is the case that $s<\tau_{\mathrm{cov}}(X^T)$, and so, by the continuity of $X^T$, there exists an open ball $B(x,\varepsilon)$ in $(T,d)$, with $x\in\Sigma$, $\varepsilon>0$, such that $B(x,\varepsilon)\cap {X}^T_{[0,s]}=\emptyset$. Now, take $u<t$. Since $A^\Sigma$ is continuous, we have that $\alpha^\Sigma$ is strictly increasing. So, setting $s=\alpha^\Sigma(u)$, we have that $s<\alpha^\Sigma(t)$. Hence, from our previous observation, we deduce that
\[B(x,\varepsilon)\cap {X}^\Sigma_{[0,u]}\subseteq B(x,\varepsilon)\cap {X}^T_{[0,s]}=\emptyset\]
for some open ball $B(x,\varepsilon)$ in $(T,d)$, with $x\in\Sigma$, $\varepsilon>0$. This implies $u\leq \bar\tau_{\mathrm{cov}}({X}^\Sigma)$. In conclusion, taking $u$ arbitrarily close to $\tau$, this yields $\tau\leq \bar\tau_{\mathrm{cov}}({X}^\Sigma)$, which completes the proof.
\end{proof}

For our final lemma, we establish that $\bar\tau_{\mathrm{cov}}({X}^\Sigma)$ arises as a limit of the cover times of the processes $\tilde{X}^n$.

\begin{lem}\label{lem46} It $P_\rho^T$-a.s.\ holds that
\[\lim_{n\rightarrow\infty}\tau_{\mathrm{cov}}(\tilde{X}^n)=\bar\tau_{\mathrm{cov}}({X}^\Sigma).\]
\end{lem}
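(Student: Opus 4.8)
The plan is to combine the composition identities and uniform convergences already obtained in the proofs of Lemmas \ref{lem43} and \ref{lem45}. Write $s_n:=\tau_{\mathrm{cov}}^{\tilde\Sigma_n}(X^T)$. From \eqref{twocovs} we have, $P_\rho^T$-a.s., that $\tau_{\mathrm{cov}}(\tilde{X}^n)=\tilde A^n_{s_n}$, while the argument in the proof of Lemma \ref{lem45} gives that $\tau_{\mathrm{cov}}^{\tilde\Sigma_n}(X^\Sigma)=A^\Sigma_{s_n}$, $P_\rho^T$-a.s. So, by Lemma \ref{lem45}, it will be enough to show that, $P_\rho^T$-a.s., $\tilde A^n_{s_n}-A^\Sigma_{s_n}\rightarrow 0$ and that $(A^\Sigma_{s_n})_{n\geq 1}$ converges; the first of these gives $\tau_{\mathrm{cov}}(\tilde X^n)-\tau_{\mathrm{cov}}^{\tilde\Sigma_n}(X^\Sigma)\to 0$, and the second identifies the common limit with $\bar\tau_{\mathrm{cov}}(X^\Sigma)$.

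First I would observe that, since $\tilde\Sigma_n\subseteq\tilde\Sigma_{n+1}$ for all $n$, the sequence $(s_n)_{n\geq 1}$ is $P_\rho^T$-a.s.\ non-decreasing, and moreover $s_n\leq\tau_{\mathrm{cov}}(X^T)=:K$ for all $n$, with $K<\infty$ $P_\rho^T$-a.s.\ (the finiteness being as in the proof of Lemma \ref{lem43}). Hence, $P_\rho^T$-a.s., $s_n\uparrow s_\infty$ for some $s_\infty\in[0,K]$. By the triangle inequality,
\[
\left|\tilde A^n_{s_n}-A^\Sigma_{s_\infty}\right|\leq \sup_{t\leq K}\left|\tilde A^n_t-A^\Sigma_t\right|+\left|A^\Sigma_{s_n}-A^\Sigma_{s_\infty}\right|.
\]
The first term on the right converges to $0$, $P_\rho^T$-a.s., by the uniform convergence on compact time intervals of $\tilde A^n$ to $A^\Sigma$ established in the proof of Lemma \ref{lem43}; the second converges to $0$ since $A^\Sigma$ is continuous and $s_n\to s_\infty$. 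Hence $\tilde A^n_{s_n}\to A^\Sigma_{s_\infty}$, and in particular $A^\Sigma_{s_n}\to A^\Sigma_{s_\infty}$ as well.

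To finish, I would invoke Lemma \ref{lem45}: since $A^\Sigma_{s_n}=\tau_{\mathrm{cov}}^{\tilde\Sigma_n}(X^\Sigma)$, the limit $A^\Sigma_{s_\infty}=\lim_{n\to\infty}\tau_{\mathrm{cov}}^{\tilde\Sigma_n}(X^\Sigma)$ equals $\bar\tau_{\mathrm{cov}}(X^\Sigma)$, $P_\rho^T$-a.s. Combining this with $\tau_{\mathrm{cov}}(\tilde X^n)=\tilde A^n_{s_n}\to A^\Sigma_{s_\infty}$ yields the claim. The argument is essentially a bookkeeping exercise once the earlier lemmas are in hand; the one point that needs a little care is the composition identity $\tau_{\mathrm{cov}}(\tilde X^n)=\tilde A^n_{s_n}$, i.e.\ \eqref{twocovs}, which relies on $\tilde\Sigma_n$ being finite together with the fact that the local time of $X^T$ at each point is $P_\rho^T$-a.s.\ strictly positive immediately after that point is hit.
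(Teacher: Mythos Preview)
Your proof is correct. Both your argument and the paper's rest on the same two ingredients --- a composition identity expressing $\tau_{\mathrm{cov}}(\tilde X^n)$ as an additive functional evaluated at a cover-type time, together with uniform convergence of that functional on compacts --- and both finish by invoking Lemma~\ref{lem45}. The difference is where the time change lives: the paper works with the embedding of $\tilde X^n$ inside $X^\Sigma$ via $\check A^n$ (so that $\tau_{\mathrm{cov}}(\tilde X^n)=\check A^n_{\tau_{\mathrm{cov}}^{\tilde\Sigma_n}(X^\Sigma)}$ and $\check A^n_t\to t$ uniformly), whereas you stay at the level of $X^T$, using $\tilde A^n$ and $A^\Sigma$ and the identity $\tau_{\mathrm{cov}}^{\tilde\Sigma_n}(X^\Sigma)=A^\Sigma_{s_n}$ already extracted in the proof of Lemma~\ref{lem45}. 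The paper's route is marginally more streamlined (no need to pass through $s_\infty$ or invoke continuity of $A^\Sigma$), while yours has the virtue of reusing only objects that appeared in Lemmas~\ref{lem43} and~\ref{lem45} without bringing in $\check A^n$. As a small simplification of your write-up: the monotonicity of $s_n$ and the introduction of $s_\infty$ are not actually needed --- from $|\tilde A^n_{s_n}-A^\Sigma_{s_n}|\leq\sup_{t\leq K}|\tilde A^n_t-A^\Sigma_t|\to 0$ and $A^\Sigma_{s_n}=\tau_{\mathrm{cov}}^{\tilde\Sigma_n}(X^\Sigma)\to\bar\tau_{\mathrm{cov}}(X^\Sigma)$ the conclusion follows directly.
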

\begin{proof} Similarly to \eqref{twocovs}, we have that
\begin{equation}\label{g1}
\tau_{\mathrm{cov}}(\tilde{X}^n)=\check A^n_{\tau_{\mathrm{cov}}^{\tilde{\Sigma}_n}(X^\Sigma)},
\end{equation}
where $\check{A}^n$ was defined at \eqref{checkan}. Moreover, by the joint continuity of $L^\Sigma$ and Lemma \ref{lem22}, it holds that
\[\check A^n_t\rightarrow \int_{\tilde{\Sigma}_n}L^\Sigma_t(x)\mu_\Sigma(dx)=t\]
uniformly on compact time intervals, $P_\rho^T$-a.s. The result follows by applying this convergence statement in conjunction with \eqref{g1} and Lemma \ref{lem45}.
\end{proof}

Putting the pieces together, we are able to give the proof of our main result.

\begin{proof}[Proof of Theorem \ref{main}]
Since $b_n^{-1}\sim \frac{2-\lambda}{4\lambda}(\lambda/2)^n$, from Lemma \ref{lem42} we immediately see that it will suffice to check the result with $\frac{2-\lambda}{4\lambda}(\lambda/2)^n\tau_{\mathrm{cov}}({X}^n)$ replaced by $b_n^{-1}\tau_{\mathrm{cov}}(\bar{X}^n)$. Given Lemma \ref{lem44}, this will follow if we can check that, for any $t>0$,
\begin{equation}\label{liminf}
\liminf_{n\rightarrow\infty}P_\rho^T\left(b_n^{-1}\tau_{\mathrm{cov}}(\bar{X}^n)\leq t\right)\geq P_\rho^T\left(\bar\tau_{\mathrm{cov}}({X}^\Sigma)< t\right).
\end{equation}
Now, from Lemma \ref{lem43}, we are able to check that, for any $t,\varepsilon>0$,
\begin{eqnarray*}
\liminf_{n\rightarrow\infty}P_\rho^T\left(b_n^{-1}\tau_{\mathrm{cov}}(\bar{X}^n)\leq t\right)&\geq&\liminf_{n\rightarrow\infty}P_\rho^T\left(b_n^{-1}\tau_{\mathrm{cov}}(\bar{X}^n)\leq t,\:b_n^{-1}\tau_{\mathrm{cov}}(\bar{X}^n)-\tau_{\mathrm{cov}}(\tilde{X}^n)\leq\varepsilon\right)\\
&\geq &\liminf_{n\rightarrow\infty}P_\rho^T\left(\tau_{\mathrm{cov}}(\tilde{X}^n)\leq t-\varepsilon,\:b_n^{-1}\tau_{\mathrm{cov}}(\bar{X}^n)-\tau_{\mathrm{cov}}(\tilde{X}^n)\leq\varepsilon\right)\\
&\geq &\liminf_{n\rightarrow\infty}P_\rho^T\left(\tau_{\mathrm{cov}}(\tilde{X}^n)\leq t-\varepsilon\right)\\
&&\qquad-\limsup_{n\rightarrow\infty}P_\rho^T\left(b_n^{-1}\tau_{\mathrm{cov}}(\bar{X}^n)-\tau_{\mathrm{cov}}(\tilde{X}^n)>\varepsilon\right)\\
&=&\liminf_{n\rightarrow\infty}P_\rho^T\left(\tau_{\mathrm{cov}}(\tilde{X}^n)\leq t-\varepsilon\right).
\end{eqnarray*}
From Lemma \ref{lem46}, the final expression here is bounded below by $P_\rho^T(\bar\tau_{\mathrm{cov}}({X}^\Sigma)<t-\varepsilon)$, which converges to $P_\rho^T(\bar\tau_{\mathrm{cov}}({X}^\Sigma)<t)$ as $\varepsilon\rightarrow 0$. Hence we have established \eqref{liminf}.

Let us highlight that, under $P_\rho^T$, the process $X^n$ is started from the root of $T_n$, that is $\rho_n$, and the process $X^\Sigma$ has initial distribution given by $\mu_\Sigma$. However, the symmetry of the model means that $\bar\tau_{\mathrm{cov}}({X}^\Sigma)$ has the same distribution under $P_x^\Sigma$ for any $x\in \Sigma$. Thus, we may replace the law of $\bar\tau_{\mathrm{cov}}({X}^\Sigma)$ under $P_\rho^T$ with that under $P_x^\Sigma$ for an arbitrary $x\in \Sigma$ to obtain the first claim of Theorem \ref{main}.

Given the distributional convergence result, to check the convergence of moments, it will suffice to check that, for each $p\geq 1$,
\begin{equation}\label{mombound}
\sup_{n\geq 1}\left(\frac{\lambda}{2}\right)^n\left\|\tau_{\mathrm{cov}}(X^n)\right\|_{p}<\infty.
\end{equation}
We will do this by using the connection between cover times and Gaussian fields. To this end, we first consider the quantity
\begin{equation}\label{gammadef}
\gamma_2\left(T_n,\sqrt{d}\right):=\inf\sup_{x\in T_n}\sum_{k\geq 0}2^{k/2}\sqrt{d(x,C_k)},
\end{equation}
where the infimum is taken over sequences $(C_k)_{k\geq 0}$ of subsets of $T_n$ with $|C_0|=1$ and $|C_k|\leq {2^{2^k}}$. By considering the possible choices of $C_0$, we see that \[\inf_{n\geq 1} \gamma_2\left(T_n,\sqrt{d}\right) \geq \inf_{n\geq 1}\inf_{x\in T_n}\sup_{y\in T_n}\sqrt{d(x,y)} \geq 1.\]
Moreover, we have that $\sup_{n\geq 1}\gamma_2(T_n,\sqrt{d})\leq \gamma_2(T,\sqrt{d})$, where $\gamma_2(T,\sqrt{d})$ is defined similarly to \eqref{gammadef}. Letting $C$ and $c$ be the constants of Lemma \ref{props}(a), we have for large $k$ that $2^{2kc}\leq 2^{2^k}$. Hence, for large $k$, we can take $C_k$ to be a $2^{-k}$-net of $(T,\sqrt{d})$ of minimal size, and this yields the finiteness of $\gamma_2(T,\sqrt{d})$. In conclusion, we have that
\[0<\inf_{n\geq 1} \gamma_2\left(T_n,\sqrt{d}\right)\leq \sup_{n\geq 1} \gamma_2\left(T_n,\sqrt{d}\right)<\infty.\]
From \cite[Theorem MM]{DLP}, we have that the quantity $\gamma_2(T_n,\sqrt{d})$ is comparable with the expected supremum of a related Gaussian field. In particular, applying the above bounds, \cite[Theorem MM]{DLP} yields that
\begin{equation}\label{gfest}
0<\inf_{n\geq 1} \mathbb{E}\sup_{x\in T_n}\eta_n(x)\leq \sup_{n\geq 1} \mathbb{E}\sup_{x\in T_n}\eta_n(x)<\infty,
\end{equation}
where, for each $n$, $(\eta_n(x))_{x\in T_n}$ is the centred Gaussian field on $T_n$ with $\eta_n(\rho_n)=0$ and covariances given by $\mathbb{E}((\eta_n(x)-\eta_n(y))^2)=d(x,y)$ (with $\mathbb{E}$ being the relevant expectation). We note that \cite[Theorem MM]{DLP} is a summary of earlier results, see \cite{Tala} in particular, with the definition of $\gamma_2$ that we choose being shown to be equivalent to the version of \cite{DLP} (up to constants) in \cite{Tala2}. Combining \eqref{gfest} with the facts that the diameters of the spaces $(T_n,d)$ are of order one and the total conductance $\mu_n(T_n)=b_n$ is of order $(2/\lambda)^n$, we obtain from \cite{Zhai}, which gives an exponential concentration result for the cover time of a random walk in terms of the expected supremum of the corresponding Gaussian field, that
\begin{equation}\label{concresult}
\sup_{n\geq 1}P_{\rho}^T\left(\tau_{\mathrm{cov}}(X^n)\geq u(2/\lambda)^n\right)\leq Ce^{-cu},\qquad \forall u\geq 1.
\end{equation}
(The main result of \cite{Zhai}, i.e.\ \cite[Theorem 1.1]{Zhai}, is given for unweighted graphs. However, the proof is given for random walks associated with electrical networks, and the above estimate follows from the final displayed equation of that. We further note that the result in question is a generalisation of the concentration result proved in \cite{Ding}.) From \eqref{concresult} we readily obtain \eqref{mombound}, and so the proof is complete.
\end{proof}

\appendix

\section{Background on resistance forms}\label{appa}

In this appendix, we present background material on stochastic processes associated with resistance forms and their time changes, as is required for this article. The fundamental theory of resistance forms was developed by Kigami in works such as \cite{AOF,Kigres}. As for the classical links between Dirichlet forms and stochastic processes, including time changes, these are treated in extensive detail in \cite{FOT}. Moreover, a study of time changes in the context of stochastic processes associated with resistance forms in particular is presented in \cite{CHK}. For the first part of the section, we closely follow the presentation of the introductory article \cite{crintro}.

Firstly, in order to introduce the notion of a resistance metric, we start by considering a finite, connected graph $G=(V,E)$, equipped with (strictly positive, symmetric) conductances $(c(x,y))_{\{x,y\}\in E}$. The corresponding effective resistance $(R(x,y))_{x,y\in V}$ is defined by setting $R(x,x)=0$ and, for $x\neq y$,
\begin{equation}\label{resdef}
R(x,y)^{-1}=\inf\left\{\frac12\sum_{x,y:\{x,y\}\in E}c(x,y)\left(f(x)-f(y)\right)^2:\:f(x)=1,f(y)=0\right\}.
\end{equation}
(See \cite{Barbook,LPW} for introductory treatments of the connections between electrical networks and random walks.) This definition was extended by Kigami to arbitrary sets as follows.

\begin{definition}[{\cite[Definition 2.3.2]{AOF}}]\label{resdefdef} Let $F$ be a set. A function $R:F\times F\rightarrow \mathbb{R}$ is a \emph{resistance metric} if, for every finite $V \subseteq F$, one can find a finite, connected graph with vertex set $V$, equipped with conductances, for which $R|_{V\times V}$ is the associated effective resistance.
\end{definition}

The sum that appears in the right-hand side of \eqref{resdef} represents the energy dissipation when the network in question is held at voltages according to $f$. Generalising the notion of electrical energy in the framework of Kigami are resistance forms. We next present a precise definition of such objects.

\begin{definition}[{\cite[Definition 2.3.1]{AOF}}]\label{resformdef} Let $F$ be a set. A pair $(\mathcal{E},\mathcal{F})$ is a resistance form on $X$ if it satisfies the following conditions.
\begin{description}
  \item[RF1] The symbol $\mathcal{F}$ represents a linear subspace of the collection of functions $\{f:\:F\rightarrow \mathbb{R}\}$ containing constants, and $\mathcal{E}$ is a non-negative symmetric quadratic form on $\mathcal{F}$ such that $\mathcal{E}(f,f)=0$ if and only if $f$ is constant on ${F}$.
  \item[RF2] Let $\sim$ be the equivalence relation on $\mathcal{F}$ defined by saying $f\sim g$ if and only if $f-g$ is constant on $F$. Then $(\mathcal{F}/\sim,\mathcal{E})$ is a Hilbert space.
  \item[RF3] If $x\neq y$, then there exists an $f\in \mathcal{F}$ such that $f(x)\neq f(y)$.
  \item[RF4] For any $x,y\in F$,
  \[\sup\left\{\frac{|f(x)-f(y)|^2}{\mathcal{E}(f,f)}:\:f\in\mathcal{F},\:\mathcal{E}(f,f)>0\right\}<\infty.\]
  \item[RF5] If $\bar{f}:=(f\wedge 1)\vee 0$, then $f\in\mathcal{F}$ and $\mathcal{E}(\bar{f},\bar{f})\leq \mathcal{E}(f,f)$ for any $f\in\mathcal{F}$.
\end{description}
\end{definition}

Whilst we will not discuss the specifics of this definition, we do highlight the following important connection between resistance metrics and forms, as well as an associated stochastic process. For simplicity of the statement, we restrict to the compact case, as that is all that is needed in this article. It is also possible to extend the result to locally compact spaces, though this requires a more careful treatment of the domain of the Dirichlet form.

\begin{theorem}[{\cite[Theorems 2.3.4, 2.3.6]{AOF}, \cite[Corollary 6.4 and Theorem 9.4]{Kigres}, \cite[Theorem 7.2.1]{FOT}}]\label{hunt} (a) Let $F$ be a set. There is a one-to-one correspondence between resistance metrics and resistance forms on $F$. This is characterised by the relation:
\[R(x,y)^{-1}=\inf\left\{\mathcal{E}(f,f):\:f(x)=1,f(y)=0\right\}\]
for $x\neq y$, and $R(x,x)=0$.\\
(b) Suppose $(F,R)$ is compact resistance metric space, and $\mu$ is a finite Borel measure on $F$ of full support. Then the corresponding resistance form $(\mathcal{E},\mathcal{F})$ is a regular Dirichlet form on $L^2(F,\mu)$, and so naturally associated with a $\mu$-symmetric Hunt process $X=((X_t)_{t\geq 0},(P_x)_{x\in F})$.
\end{theorem}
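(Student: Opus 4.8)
The plan is to recover the structure of Kigami's development, treating parts (a) and (b) in turn; since the statement is a synthesis of results in \cite{AOF,Kigres,FOT}, I describe the route rather than reproduce every verification.

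\textbf{Part (a).} First I would pass from a resistance form $(\mathcal{E},\mathcal{F})$ to a resistance metric. Define $R$ by the displayed variational formula and $R(x,x)=0$; positivity and finiteness of the infimum for $x\neq y$ are exactly RF3 and RF4. To show $R$ is a resistance metric in the sense of Definition \ref{resdefdef}, fix a finite $V\subseteq F$ and take the \emph{trace} of the form onto $V$, namely $\mathcal{F}_V:=\{f|_V:f\in\mathcal{F}\}$ and $\mathcal{E}_V(g,g):=\inf\{\mathcal{E}(f,f):f|_V=g\}$. Using RF1, RF2 and RF5 one checks this infimum is attained and that $(\mathcal{E}_V,\mathcal{F}_V)$ is a resistance form on the finite set $V$; every such form is the Dirichlet energy of a uniquely determined weighted graph (diagonalise the symmetric nonnegative matrix representing $\mathcal{E}_V$, whose kernel is the constants), and the effective resistance of this network is $R|_{V\times V}$. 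In particular the triangle inequality for $R$ holds, and it may also be read directly off the variational formula by pasting together test functions. Conversely, given a resistance metric $R$, I would build the form as an inductive limit over finite subsets: for each finite $V$ let $(\mathcal{E}_V,\mathbb{R}^V)$ be the Dirichlet energy of the network witnessing $R|_{V\times V}$; these are \emph{compatible}, in that tracing $\mathcal{E}_{V'}$ onto $V\subseteq V'$ returns $\mathcal{E}_V$ (the classical fact that effective resistances are preserved under Schur complements), so $V\mapsto\mathcal{E}_V(f|_V,f|_V)$ is nondecreasing. Setting
\[
\mathcal{F}:=\Bigl\{f:F\to\mathbb{R}:\ \sup_{V\ \text{finite}}\mathcal{E}_V(f|_V,f|_V)<\infty\Bigr\},\qquad \mathcal{E}(f,f):=\sup_{V}\mathcal{E}_V(f|_V,f|_V),
\]
one verifies RF1--RF5 (RF4 being immediate from $|f(x)-f(y)|^2\le R(x,y)\,\mathcal{E}(f,f)$), and uniqueness follows since any resistance form inducing $R$ has these prescribed finite traces.

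\textbf{Part (b).} Assume now $(F,R)$ is compact and $\mu$ a finite Borel measure of full support. The bound $|f(x)-f(y)|^2\le R(x,y)\,\mathcal{E}(f,f)$ makes every $f\in\mathcal{F}$ continuous, so $\mathcal{F}\subseteq C(F)\subseteq L^2(F,\mu)$. I would then check $(\mathcal{E},\mathcal{F})$ is a Dirichlet form: RF2 gives $\mathcal{E}$-completeness of $\mathcal{F}/{\sim}$, and adding the $L^2$-norm (which sees the constants that $\mathcal{E}$ kills) yields completeness of $(\mathcal{F},\mathcal{E}_1)$, where $\mathcal{E}_1:=\mathcal{E}+\langle\cdot,\cdot\rangle_{L^2(\mu)}$; an $\mathcal{E}_1$-Cauchy sequence is uniformly Cauchy by the resistance estimate together with an equicontinuity argument, so its $L^2$-limit lies in $C(F)$ and coincides with the $\mathcal{E}$-limit, and hence the form is closed in $L^2(F,\mu)$. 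The Markov property is exactly RF5. For regularity (in the compact case this reduces to uniform density of $\mathcal{F}$ in $C(F)$, the $\mathcal{E}_1$-core condition being automatic since $\mathcal{F}\cap C(F)=\mathcal{F}$), one uses that $\mathcal{F}$ contains the constants, separates points by RF3, and -- by resistance-form theory leveraging RF5 -- is a sublattice of $C(F)$ satisfying $\mathcal{E}(f\vee g,f\vee g)+\mathcal{E}(f\wedge g,f\wedge g)\le\mathcal{E}(f,f)+\mathcal{E}(g,g)$; the Kakutani--Krein lattice version of the Stone--Weierstrass theorem then gives uniform density, and hence $L^2$-density since $\mu$ is finite. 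Finally, a regular Dirichlet form on $L^2(F,\mu)$ is associated with a $\mu$-symmetric Hunt process by \cite[Theorem 7.2.1]{FOT}.

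\textbf{Main obstacle.} I expect the delicate parts to be (i) the inductive-limit construction in (a) -- checking compatibility of the finite traces and that the supremum still satisfies all of RF1--RF5 -- and (ii) the regularity step in (b), specifically extracting the lattice property with its energy inequality from the Markovian axiom RF5 so that Kakutani--Krein applies; the remaining verifications are soft once these structural facts are in hand.
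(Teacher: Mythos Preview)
The paper does not prove this theorem: it is stated in the appendix purely as background, with the proof delegated entirely to the cited references \cite[Theorems 2.3.4, 2.3.6]{AOF}, \cite[Corollary 6.4 and Theorem 9.4]{Kigres} and \cite[Theorem 7.2.1]{FOT}. There is therefore nothing in the paper itself to compare your attempt against.

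That said, your sketch is a faithful outline of how the arguments in those sources actually run. Part (a) is indeed established in \cite{AOF} via finite traces and compatible systems of networks, exactly along the lines you describe; part (b) combines the regularity result of \cite{Kigres} (where the density argument uses the lattice structure you mention) with the standard Dirichlet-form/Hunt-process correspondence of \cite{FOT}. Your identification of the two delicate points --- compatibility of the inductive system and extracting regularity from RF5 --- matches where the real work lies in Kigami's treatment. If you wanted to turn this into a self-contained proof rather than a roadmap, those are precisely the places you would need to fill in carefully; but for the purposes of this paper, simply citing the references, as the author does, is the intended level of detail.
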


For those not familiar with Dirichlet forms, we remark that the connection between $(\mathcal{E},\mathcal{F})$ and $X$ of the above theorem can be seen through the generator of $X$. In particular, this is a non-positive definite self-adjoint operator $\Delta$ on $L^2(F,\mu)$ that yields the semigroup of $X$; specifically, the semigroup can be written $(e^{t\Delta})_{t\geq 0}$. Moreover, $\Delta$ also characterizes the Dirichlet form $(\mathcal{E},\mathcal{F})$ through the relation
\[\mathcal{E}(f,g)=-\int_F (\Delta f)  gd\mu,\qquad \forall f\in\mathcal{D},g\in\mathcal{F},\]
where $\mathcal{D}$ is the domain of $\Delta$. See \cite[Chapter 1]{FOT} for details. We further comment that, in general, when discussing the properties of Hunt processes associated with Dirichlet forms, these are only defined up to an exceptional set. However, in the case of resistance forms, it is known that any non-empty set has non-zero capacity (see \cite[Theorem 9.9]{Kigres}); it follows that the law $P_x$ is defined uniquely for every $x\in F$. For further properties of Hunt processes, including a detailed definition, we refer the reader to \cite[Section A.2]{FOT}, but we do note here that the $X$ of the Theorem \ref{hunt} will be a strong Markov process that has c\`{a}dl\`{a}g sample paths $P_x$-a.s.\ for every $x\in F$.

\begin{rem}\label{resrem}
If $(R(x,y))_{x,y\in V}$ is the effective resistance on a finite, connected graph $G=(V,E)$, equipped with (strictly positive, symmetric) conductances $(c(x,y))_{\{x,y\}\in E}$, and $\mu$ is a measure of full support on the set $V$, then the process $X$ of Theorem \ref{hunt} is the continuous-time Markov chain with generator given by, for $f:V\rightarrow\mathbb{R}$,
\[\Delta(f)(x):=\frac{1}{\mu(\{x\})}\sum_{y\in V:\{x,y\}\in E}c(x,y)(f(y)-f(x)).\]
\end{rem}

The notion of a time change of a stochastic process associated with Dirichlet form is studied extensively in \cite[Section 6.2]{FOT}. Here, we will only consider processes associated with compact resistance metric spaces, and it will always be the case that the processes we discuss admit jointly continuous local times in the sense of the following theorem. This enables a particularly simple and explicit presentation of the key result, see Theorem \ref{timechangethm} below.

\begin{prop}[{\cite[Proposition 4.15]{Noda}, cf. \cite[Lemma 2.2]{Cr2}}]\label{localtimes}
Let $(F,R)$ be a compact resistance metric space, and $\mu$ be a finite Borel measure on $F$ of full support. Suppose
\begin{equation}\label{cover}
\int_0^1\left(\log N(F,R^{1/4},r)\right)^{1/2}dr<\infty,
\end{equation}
where $N(F,R^{1/4},r)$ is the minimal number of $r$-balls needed to cover $(F,R^{1/4})$. It is then the case that the Hunt process $X$ of Theorem \ref{hunt} admits jointly continuous local times $(L_t(x))_{x\in F,\:t\geq0}$. Moreover, the local times satisfy the occupation density formula, i.e.\ it holds that, for all $x \in F$, $t \geq 0$ and all non-negative measurable functions $f:F \rightarrow\mathbb{R}_+$,
\[\int_{0}^tf(X_s)ds = \int_F f(y)L_t(y)\mu(dy),\qquad P_x\mbox{-a.s.}\]
\end{prop}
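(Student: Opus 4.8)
The plan is to reduce the existence and joint continuity of the local times to the sample-path continuity of an associated Gaussian process, and then to obtain the occupation density identity from the defining property of the local times via a monotone class argument. First I would fix a reference point $x_0\in F$ and consider the Green's function of $X$ killed on first hitting $x_0$: since $R$ is a resistance metric (hence, restricted to any finite subset, realised by an electrical network), this kernel is
\[
g(x,y)=\tfrac12\bigl(R(x,x_0)+R(y,x_0)-R(x,y)\bigr),\qquad x,y\in F,
\]
which is symmetric and positive semi-definite. Let $(G_x)_{x\in F}$ be the centred Gaussian process with $\mathbb{E}[G_xG_y]=g(x,y)$; equivalently $G_{x_0}=0$ and $\mathbb{E}[(G_x-G_y)^2]=R(x,y)$, so that the canonical pseudometric of $G$ is $R^{1/2}$. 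Since a ball of radius $r$ in the metric $R^{1/4}$ is precisely a ball of radius $r^2$ in the metric $R^{1/2}$, we have $N(F,R^{1/4},r)=N(F,R^{1/2},r^2)$, and a change of variables shows that \eqref{cover} implies the Dudley entropy bound $\int_0^1\bigl(\log N(F,R^{1/2},r)\bigr)^{1/2}\,dr<\infty$; in particular $(F,R^{1/2})$ is totally bounded, and by the Dudley--Fernique theory $G$ has a version with a.s.\ uniformly continuous sample paths and $\mathbb{E}[\sup_x G_x]<\infty$.

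The next step is to transfer this continuity to the local times. One route is via the generalised second Ray--Knight theorem (or the Dynkin-type isomorphism theorem for strongly symmetric Markov processes), which relates the field of local times of $X$ killed at $x_0$, evaluated at an inverse local time at $x_0$, to $\tfrac12(G+c)^2$ for a suitable constant $c$; the Marcus--Rosen equivalence then yields that $X$ admits jointly continuous local times $(L_t(x))_{x\in F,\,t\ge0}$ precisely because $G$ is continuous. A more hands-on alternative, in the spirit of \cite[Lemma 2.2]{Cr2}, is to use that the resolvent density of $X$ obeys quantitative estimates in terms of $R$, together with the strong Markov property and an excursion decomposition, to obtain moment bounds on the increments $L_t(y)-L_t(z)$ controlled by powers of $R(y,z)$, uniformly for $t$ in compact intervals; a two-parameter Kolmogorov--Chentsov/chaining argument, in which the chaining series is summed using exactly the metric-entropy input of \eqref{cover}, then produces a jointly continuous modification. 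Since in a resistance form every non-empty set has positive capacity (\cite[Theorem 9.9]{Kigres}), the process is defined, and these statements hold, from every starting point $x\in F$.

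It remains to record the occupation density formula. By the defining property of local times, for each fixed $t$ the measure $A\mapsto\int_F\mathbf{1}_A(y)L_t(y)\,\mu(dy)$ coincides $P_x$-a.s.\ with the occupation measure $A\mapsto\int_0^t\mathbf{1}_A(X_s)\,ds$; hence $\int_0^tf(X_s)\,ds=\int_F f(y)L_t(y)\,\mu(dy)$ holds first for indicators $f$, then for bounded non-negative measurable $f$ by a monotone class argument, and finally for all non-negative measurable $f$ by monotone convergence, with the joint continuity of $L$ allowing the exceptional set to be chosen independently of $t$. The main obstacle is the estimate underlying the second paragraph: producing moment or chaining bounds on the local-time increments that are uniform in $t$ on compacts and sharp enough that \eqref{cover} suffices to close the argument -- this is the point at which the entropy condition is genuinely consumed, whether one routes through the isomorphism theorem for Gaussian processes or through a direct excursion-theoretic computation. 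A secondary, more technical issue is upgrading every assertion from quasi-everywhere to every starting point, which relies on the non-polarity of points specific to the resistance-form setting.
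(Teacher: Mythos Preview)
The paper does not supply its own proof of this proposition: it is quoted in the appendix as background, with a direct citation to \cite[Proposition 4.15]{Noda} (cf.\ \cite[Lemma 2.2]{Cr2}), so there is no in-paper argument against which to compare yours.

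That said, your outline is the standard route and is essentially how the cited references proceed. The reduction to continuity of the centred Gaussian field with increment variance $R(x,y)$, via a Dynkin/Ray--Knight isomorphism and the Marcus--Rosen equivalence, is exactly the mechanism behind these results; the alternative of deriving moment bounds on $L_t(y)-L_t(z)$ from resolvent estimates and closing with a two-parameter chaining argument is the approach of \cite[Lemma 2.2]{Cr2}. Your entropy manipulation is correct: after the substitution $u=r^2$ the assumption \eqref{cover} becomes $\int_0^1(\log N(F,R^{1/2},u))^{1/2}\,\tfrac{du}{2\sqrt{u}}<\infty$, which dominates the Dudley integral for the canonical metric $R^{1/2}$; the extra strength coming from the exponent $1/4$ is what \cite{Noda} uses to obtain the uniformity in $t$ required for joint (as opposed to merely spatial) continuity and for the quantitative convergence statements appearing elsewhere in that paper. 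Your remark that non-polarity of points (\cite[Theorem 9.9]{Kigres}) upgrades quasi-everywhere statements to every starting point is also the correct way to handle that technicality. In short, your sketch is sound and matches the literature the paper is invoking; the genuine labour, as you identify, is in the uniform-in-$t$ chaining/isomorphism step, which is precisely what the citation to \cite{Noda} is carrying.
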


Now, let $\nu$ be a Borel measure on $F$ with support given by the closed set $\tilde{F}\subseteq F$. We complete the section by describing the time change of $X$ with respect to $\nu$. In particular, in the setting of Proposition \ref{localtimes}, it is possible to define an additive functional $(A_t)_{t\geq0}$ by setting
\[A_t:=\int_{\tilde{F}}L_t(y)\nu(dy).\]
Writing
\[\alpha(t):=\inf\left\{s\geq 0:\:A_s>t\right\}\]
for its right-continuous inverse, we then set
\begin{equation}\label{xnudef}
{X}^\nu_t:=X_{\alpha(t)};
\end{equation}
this is the time change of $X$ according to $\nu$. Just as $X$ is the process associated with $(F,R,\mu)$, it transpires that we have ${X}^\nu$ is the process associated with $(\tilde{F},\tilde{R},\nu)$, where $\tilde{R}$ is simply the restriction of $R$ to $\tilde{F}\times\tilde{F}$. This is the content of the final theorem of this section.

\begin{theorem}\label{timechangethm}
Let $(F,R)$ be a compact resistance metric space satisfying \eqref{cover}, and $\mu$ be a finite Borel measure on $F$ of full support. Moreover, suppose $\nu$ is a Borel measure on $F$ with support given by the closed set $\tilde{F}\subseteq F$. The following then hold.\\
(a) The pair $(\tilde{F},\tilde{R})$ is a compact resistance metric space. The associated resistance form, written  $(\tilde{\mathcal{E}},\tilde{\mathcal{F}})$, is characterized by:
\[\tilde{\mathcal{E}}(f,f)=\inf\left\{\mathcal{E}(g,g):\:g\in\mathcal{F},\:g|_{\tilde{F}}=f\right\},\]
\[\tilde{\mathcal{F}}=\left\{g|_{\tilde{F}}:\:g\in\mathcal{F}\right\}.\]
(b) Denote by $\tilde{X}=((\tilde{X}_t)_{t\geq 0},(\tilde{P}_x)_{x\in \tilde{F}})$ the $\nu$-symmetric Hunt process associated with $(\tilde{F},\tilde{R},\nu)$ by Theorem \ref{hunt}. For any $x\in F$, the law of $X^{\nu}$ (defined as at \eqref{xnudef}) under $P_x$ is given by that of $\tilde{X}$, started from $X_{\tau_{\tilde{F}}}$, where $\tau_{\tilde{F}}:=\inf\{t\geq 0:\:X_t\in\tilde{F}\}$ is the hitting time of $\tilde{F}$ by $X$. In particular, if $x\in \tilde{F}$, then the law of $X^{\nu}$ under $P_x$ is given by that of $\tilde{X}$ under $\tilde{P}_x$.
\end{theorem}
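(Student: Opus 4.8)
\emph{The plan} is to deduce the statement by combining Kigami's trace theorem for resistance forms (for the algebraic and metric part) with the classical theory of time changes of symmetric Markov processes by positive continuous additive functionals (for the probabilistic part), as developed in \cite[Section 6.2]{FOT}.

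For part (a): since $\tilde{F}$ is closed in the compact space $(F,R)$, the restriction $\tilde{R}=R|_{\tilde F\times\tilde F}$ makes $(\tilde F,\tilde R)$ a compact metric space, so it remains only to check that $\tilde R$ is a resistance metric and that its resistance form is the trace form $(\tilde{\mathcal E},\tilde{\mathcal F})$ displayed in the statement. This is exactly Kigami's trace theorem, \cite[Theorem 8.4]{Kigres} (cf.\ \cite{AOF}): one verifies that, for each $f\in\tilde{\mathcal F}$, the infimum defining $\tilde{\mathcal E}(f,f)$ is attained by the (unique) harmonic extension of $f$, using the Hilbert space property RF2; that the pair $(\tilde{\mathcal E},\tilde{\mathcal F})$ satisfies RF1--RF5; and that the resistance metric it induces via Theorem \ref{hunt}(a) coincides with $R|_{\tilde F\times\tilde F}$. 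All of this is carried out in the cited references, and Theorem \ref{hunt}(a) applied on $\tilde F$ then identifies $(\tilde{\mathcal E},\tilde{\mathcal F})$ as \emph{the} resistance form of $(\tilde F,\tilde R)$.

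For part (b): the covering assumption \eqref{cover} allows us to apply Proposition \ref{localtimes}, so $X$ admits jointly continuous local times $(L_t(x))_{x\in F,\,t\ge0}$ satisfying the occupation density formula. One then checks that $A_t=\int_{\tilde F}L_t(y)\,\nu(dy)$ is a positive continuous additive functional of $X$ whose Revuz measure is $\nu$ --- this follows from the occupation density formula together with the definition of the Revuz correspondence, as in \cite[Lemmas 2.2--2.3]{Cr2} or \cite[Section 4]{Noda}. By \cite[Theorem 6.2.1]{FOT}, the time change $X^\nu_t=X_{\alpha(t)}$ of \eqref{xnudef} is then a $\nu$-symmetric Hunt process on $\tilde F$ whose Dirichlet form on $L^2(\tilde F,\nu)$ is the trace of $(\mathcal E,\mathcal F)$ onto $\tilde F$; by part (a) and the standard description of the trace of a Dirichlet form (noting that, for resistance forms, quasi-continuous representatives are genuinely continuous and $\nu$ has full support $\tilde F$), this coincides with the regular Dirichlet form on $L^2(\tilde F,\nu)$ associated with $(\tilde{\mathcal E},\tilde{\mathcal F})$, so Theorem \ref{hunt}(b) identifies $X^\nu$ with $\tilde X$. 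It remains to locate the starting point. For $y\in\tilde F$ and $s<\tau_{\tilde F}$, the hitting time $\tau_y$ of $y$ satisfies $\tau_y\ge\tau_{\tilde F}>s$, so (by the defining property of local times) $L_s(y)=0$; hence $A_s=0$ for $s\le\tau_{\tilde F}$ and $\alpha(0)=\inf\{s:A_s>0\}\ge\tau_{\tilde F}$. Conversely, with $z:=X_{\tau_{\tilde F}}\in\tilde F$, for every $s>\tau_{\tilde F}=\tau_z$ we have $L_s(z)>0$, and then the joint continuity of the local times and the fact that $\nu$ has full support $\tilde F$ force $A_s>0$; thus $\alpha(0)=\tau_{\tilde F}$ and $X^\nu_0=X_{\tau_{\tilde F}}$. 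This gives the first claim of (b), and the case $x\in\tilde F$ (where $\tau_{\tilde F}=0$) follows at once.

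The step requiring the most care is the translation of the quasi-everywhere statements produced by the general Dirichlet form machinery of \cite[Theorem 6.2.1]{FOT} into the everywhere statements asserted here, together with the precise identification of the Revuz measure of $A$ with $\nu$ itself (not merely a fine-support or capacity-modified version) and the confirmation that $X^\nu$ genuinely lives on all of $\tilde F$. In the resistance-form setting this is feasible because every non-empty set has positive capacity \cite[Theorem 9.9]{Kigres} (so there is no exceptional set and $P_x$ is well defined for every $x$), but one should spell it out using the joint continuity of the local times and the full support of $\nu$ on $\tilde F$.
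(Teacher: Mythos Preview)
Your proposal is correct and follows essentially the same approach as the paper: part (a) via Kigami's trace theorem \cite[Theorem 8.4]{Kigres} and part (b) via \cite[Theorem 6.2.1]{FOT}. The paper's own proof is extremely terse (just the two citations), whereas you have usefully expanded on the identification of the Revuz measure, the starting-point argument $\alpha(0)=\tau_{\tilde F}$, and the passage from quasi-everywhere to everywhere statements via \cite[Theorem 9.9]{Kigres}; these elaborations are all sound and fill in exactly the details the paper leaves implicit.
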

\begin{proof}
The first claim of part (a) is obvious from Definition \ref{resdefdef}. The remaining claim is (a simplification of) \cite[Theorem 8.4]{Kigres}. Part (b) is an application of \cite[Theorem 6.2.1]{FOT}.
\end{proof}

\section*{Acknowledgements}

The author was supported by JSPS Grant-in-Aid for Scientific Research (C) 24K06758 and the Research Institute for Mathematical Sciences, an International Joint Usage/Research Center located in Kyoto University. He would like to thank Jian Ding for making him aware of the reference \cite{Zhai}, which was used to check the convergence of moments in Theorem \ref{main}.

\bibliography{binarycover}
\bibliographystyle{amsplain}

\end{document}